\providecommand{\U}[1]{\protect\rule{.1in}{.1in}}
\newtheorem{theorem}{Theorem}
\newtheorem{claim}[theorem]{Claim}
\newtheorem{corollary}[theorem]{Corollary}
\newtheorem{definition}[theorem]{Definition}
\newtheorem{proposition}[theorem]{Proposition}
\newenvironment{proof}[1][Proof]{\noindent\textbf{#1.} }{\ \rule{0.5em}{0.5em}}
\DeclareMathOperator*{\argmin}{arg\,min}
\newcommand{\norm}[1]{\|#1\|_{{\mathbb L}_2}}
\newcommand{\normn}[1]{\|#1\|_n}
\begin{document}
\title{
Concentration inequalities and cut-off phenomena for penalized model selection
within a basic Rademacher framework }

\author{
Pascal Massart \\
Institut de Math\'ematique d'Orsay \\
B\^atiment 307 \\
Universit\'e Paris-Saclay \\
91405 Orsay-Cedex, France 
\and
Vincent Rivoirard \\
CEREMADE \\
Universit\'e Paris Dauphine \\
Place du Mar\'echal de Lattre de Tassigny \\
75016 Paris, France\\
\&\\
 Universit\'e Paris-Saclay,\\ CNRS, Inria, Laboratoire de math\'ematiques d'Orsay,\\ 91405, Orsay, France
}

\setcounter{page}{0}

\maketitle

\begin{abstract}
This article exists first and foremost to contribute to a tribute to Patrick Cattiaux. One of the two authors has known Patrick Cattiaux for a very long time, and owes him a great deal. If we are to illustrate the adage that life is made up of chance, then what could be better than the meeting of two young people in the 80s, both of whom fell in love with the mathematics of randomness, and one of whom changed the other's life by letting him in on a secret: if you really believe in it, you can turn this passion into a profession. By another happy coincidence, this tribute comes at just the right time, as Michel Talagrand has been awarded the Abel prize. The temptation was therefore great to do a double. Following one of the many galleries opened up by mathematics, we shall first draw a link between the mathematics of Patrick Cattiaux and that of Michel Talagrand. Then we shall show how the abstract probabilistic material on the concentration of product measures thus revisited can be used to shed light on cut-off phenomena in our field of expertise, mathematical statistics. Nothing revolutionary here, as everyone knows the impact that Talagrand's work has had on the development of mathematical statistics since the late 90s, but we've chosen a very simple framework in which everything can be explained with minimal technicality, leaving the main ideas to the fore. 
\end{abstract}

\section{Introduction}
Talagrand's work on concentration of measure gave a decisive impetus to this subject, not only in its fundamental aspects, but also in 
its implications for other fields, such as statistics and machine learning, which will be of particular interest to us here. There is 
such an overflow of results that we thought it would be useful to highlight a few key ideas within a deliberately simple and 
streamlined framework. Our ambition is to illustrate why and how concentration inequalities come into play to understand cut-off 
phenomena in some high-dimensional statistical problems, while giving an insight into how these tools are constructed. 

The statistical framework in which we are going to place ourselves is that of linear regression, for which one observes a random vector 
$$Y=f+\sigma\epsilon$$
in the Euclidean space $\mathbb R^n$, where $f$ is an unknown vector to be estimated and the noise level $\sigma$ is assumed to be known at first. The noise vector $\epsilon$ has independent and identically distributed components. They are assumed to be centered in expectation and normalized, i.e. with variance equal to $1$. To keep things as simple as possible, we shall even assume that they are Rademacher variables, i.e. uniformly distributed random signs. Of course, this assumption is merely a convenience to lighten the presentation, but it is clear that everything we present here extends immediately to the case where the noise variables are bounded in absolute value by a constant $M$ (greater than $1$, of course, since the variables have a variance equal to $1$). 
This a priori parametric estimation problem can easily be turned into a non-parametric one if we bear in mind that $f$ is nothing but the vector of signal intensities on $[0,1]$ at successive instants $i/n$, with $1\leq i\leq n$. In high dimension, i.e. if $n$ tends to infinity, estimating a smooth signal is more or less the same as estimating the vector $f$. A flexible strategy for this is to select a least-squares estimator from a family given a priori. A simple illustration of this strategy is to start from an ordered basis $(\phi_j)_{1\leq j\leq n}$ of $\mathbb R^n$, then form the linear model family $S_D$, with $1\leq D\leq n$, where $S_D$ is the vector space spanned by the first $D$ basis functions $(\phi_j)_{1\leq j\leq D}$. In the case where $f$ comes from a signal, the natural ordered basis comes from the Fourier basis, as will be detailed in section 5 of the article. Selecting a proper model $S_D$ and therefore a proper least-squares estimator $\hat f_D$ built on $S_D$ for $f$ can be performed by minimizing the penalized least-squares criterion 
$$\operatorname{crit}(D)=-\parallel \hat{f}_{D}\parallel ^{2}+\operatorname{pen}\left(
D\right).$$
Choosing a penalty function of the form $\operatorname{pen}\left(
D\right)=\kappa \sigma^2 D$, the following interesting high-dimensional cut-off phenomenon can be observed (on simulations and on real data as well) on the behavior of $\hat D_\kappa =\operatorname{argmin} _D-\parallel \hat{f}_{D}\parallel ^{2}+\kappa \sigma^2 D$: if $\kappa$ stays below some critical value $\hat D_\kappa$ takes large values while at this critical value $\hat D_\kappa$ suddenly drops to a much more smaller value. Interestingly this phenomenon can be observed for a variety of penalized model selection criteria  and it helps to calibrate these criteria from the data themselves. See \cite{AB09, AB11, AM09, minpen, mas-stflour,  Sau13} for regression models,
\cite{BLPR11, LM16, Ler12, LMaRB16,  mas-stflour, RBRTM11} for density estimation, or \cite{GvH13, LT16, RBR10} for more involved models. We also refer the reader to \cite{Arl19} for a survey on minimal penalties related to the slope heuristics.
 
In this paper, we shall provide a complete mathematical analysis which shows that this phenomenon occurs with high probability with a critical value which is asymptotically equal to $1$. Since the same result has been proved in \cite{minpen} for standard Gaussian errors this shows the robustness of the phenomenon to non-Gaussianity. The reason why concentration inequalities play a crucial role in the mathematical understanding of this phenomenon is particularly clear if we consider the case of pure noise where $f=0$ and if we assume that $\sigma=1$ to make things even simpler. In this case $Y=\epsilon$ and the square norm of the least-squares estimator $\hat{f}_{D}$ can be explicitly computed as 
$$ \parallel\hat{f}_{D}\parallel^2=\sum_{1\leq j\leq D}\langle \epsilon,\phi_j\rangle^2.$$
In the Gaussian case, the summands are independent and this quantity is merely distributed according to a chi-square distribution with $D$ degrees of freedom. In the Rademacher case it is no longer the case and it is not that obvious to get a sharp probabilistic control. The very simple trick which allows to connect the issue of understanding the behavior of this quantity with Talagrand's works on concentration of product measures is to consider $\parallel\hat{f}_{D}\parallel$ rather than its square, just because of the formula $$\parallel\hat{f}_{D}\parallel=\sup _{b\in B}\langle b, \epsilon\rangle$$
where $B=\{\sum_{1\leq j\leq D}\theta_j\phi_j \vert \sum_{1\leq j\leq D}\theta_j^2\leq 1\}$. This formula allows to interprete $\parallel\hat{f}_{D}\parallel$ as the supremum of a Rademacher process. For such a process, one can apply different related techniques to ultimately obtain concentration inequalities of $\parallel\hat{f}_{D}\parallel^2$ around $D$. This concentration of $\parallel\hat{f}_{D}\parallel^2/D$ around $1$ provides an explanation for the asymptotic behavior of the critical value for $\kappa$ mentioned above. 

 The paper is organized in two parts: the first part is probabilistic while the second one is devoted to statistics. In the first part, we first revisit the connection between optimal transportation  and Talagrand's geometrical approach to concentration. In particular we emphasize the importance of the variational formula for entropy to establish this connection. Needless to say, the variational formula plays an important role in statistical mechanics and this topic as well as optimal transportation are among the topics of interest of Patrick Cattiaux. These abstract results are used to build explicit concentration bounds for suprema of Rademacher processes. In the second part, we prove two complementary results on penalized least-squares model selection that highlight the above-mentioned cut-off phenomenon. Finally, we illustrate the advantages of this approach in a non-parametric estimation context and produce a few simulations that allow us to visualize the cut-off phenomenon. 

\section{Transportation and Talagrand's convex distance for product measures}

The aim of this probabilistic part of the article is twofold. Firstly, we wish to demonstrate, in an elementary way, the link between 
transport inequalities - one of Patrick Cattiaux's mathematical interests - and concentration inequalities. In passing, we shall 
revisit the connection between the functional point of view and the isoperimetric point of view developed by Talagrand in his works on 
concentration of measure.
Secondly, the resulting concentration inequalities will be used to control the suprema of Rademacher processes.  
This will prove to be the crucial tool for the statistical part of the article.
The point of view adopted here is to focus our attention on concentration inequalities for
a function of independent random variables $\zeta(X_1,X_2,\dots, X_n)$, where $\zeta$ denotes some real 
valued measurable function on some abstract product space 
$\mathcal{X}^{n}=\mathcal{X}_{1}\times\mathcal{X}_{2}\times \dots \times \mathcal{X}_{n}$ equipped with some product $\sigma$-field 
$\mathcal{A}^{n}=\mathcal{A}_{1}\otimes\mathcal{A}_{2}\otimes \dots \otimes \mathcal{A}_{n}$.
More precisely we shall consider the following regularity condition. If $v$ denotes a positive real number, we say that $\zeta$ satisfies
the bounded differences condition in quadratic mean ($\mathcal{C}_{v}$) if 

\begin{equation}
\zeta\left(  x\right)  -\zeta\left(  y\right)  \leq\sum_{i=1}^{n}c_{i}\left(  x\right)  1\!\mathrm{l}_{x_{i}\neq y_{i}}  \text{.} \label{el1lip}%
\end{equation}
where the coefficients $c_i$'s are measurable and $$\left\Vert
{\displaystyle\sum\limits_{i=1}^{n}}
c_{i}^{2}\right\Vert _{\infty}\leq v.$$
The strength of this condition is that no structure is needed to formulate it. However, if one wants to figure out what it means, it is interesting to realize that if $\zeta$ is a smooth (continuously differentiable) convex function on $[0,1]^n$, for instance, then $\zeta$ satisfies ($\mathcal{C}_{v}$) whenever $\parallel\parallel\nabla f\parallel^2\parallel_\infty\leq v$.  In this spirit, this regularity condition will typically enable us to study the behavior of suprema of Rademacher processes, which, as announced above, will be our target example here. But the fact that no structure is required is important because it also allows to study many examples of functions of independent random variables from random 
combinatorics. It was in this field that Talagrand's early work had its most immediate impact. The contribution of Talagrand's 
seminal work in \cite{Tal95} in this context is to relax the bounded differences condition used in Mac Diarmid's bound \cite{Mcdiar}, which involves 
$
{\displaystyle\sum\limits_{i=1}^{n}}
\left\Vert c_{i}^{2}\right\Vert _{\infty}$ instead of $\left\Vert
{\displaystyle\sum\limits_{i=1}^{n}}
c_{i}^{2}\right\Vert _{\infty}$.
\subsection{Talagrand's convex distance}
The crucial concept introduced by Talagrand to make this breakthrough is what he called the convex distance which can defined as 
follows.
For any measurable set $A$ and any point $x$ in $\mathcal X^n$ let
\begin{equation}
d_T(x,A)=\sup_{\alpha \in \mathcal B_n^+}\inf_{y\in A}\sum_{i=1}^n\alpha _i1\!\mathrm{l}_{x_{i}\neq y_{i}}\label{dtal}%
\end{equation}
where $\mathcal B_n^+$ denotes the set of vectors of the unit closed euclidean ball of ${\mathbb R}^n$ with non negative components. 

As explained very well in Michel Ledoux's fine 
article \cite{Led01}, for example, the concentration 
property of a probability measure on a metric space 
results in concentration inequalities of Lipschitz 
functions around their median. 
The nice thing is that an analogous mechanism can 
be set up for Talagrand's convex distance on a 
product probability space, 
with the ($\mathcal{C}_{v}$) condition replacing the 
Lipschitz condition.  
More precisely, the role played by $d_T$ in the 
study of functions satisfying to condition 
($\mathcal{C}_{v}$) is as follows.
Assume that $v=1$ for simplicity. Choosing $A$ as a 
level set of the function $\zeta$, i.e. $A=\{\zeta\leq s\}$, 
we notice that 
$$\inf_{y\in A}\sum_{i=1}^n c_i(x)1\!\mathrm{l}_{x_{i}\neq y_{i}}\leq d_T(x,A)$$
and therefore, if $d_T(x,A)<t$, there exists some point $y$ such that $\zeta(y)\leq s$ and 
$$\sum_{i=1}^n c_i(x)1\!\mathrm{l}_{x_{i}\neq y_{i}}<t \text{.}$$
Using such a point in condition ($\mathcal{C}_{1}$) leads to $\zeta(x)<t+s$. In other words, for a function $\zeta$ satisfying 
condition ($\mathcal{C}_{1}$), the following inclusion between level sets holds true for any real number $s$ and any non negative real 
number $t$:
\begin{equation}
\{\zeta\geq s+t\}\subseteq \{d_T(.,\{\zeta\leq s\})\geq t\} \label{levelset} \text{.}
\end{equation}
This means that in terms of level sets, everything works as if $d_T$ were really a usual distance between points and $\zeta$ were a 
$1$-Lipschitz function with respect to $d_T$. Given some random vector $X$ taking its values in $\mathcal X^n$ we can now connect the 
concentration of $\zeta(X)$ around its median $M$ to the concentration rate of the probability distribution of $X$ on $\mathcal X^n$ with 
respect to $d_T$ defined as  
$$\rho (t)=\sup_{A}P\{X\in A\}P\{d_T(X,A)\geq t\}\text{,}$$
where the supremum in the formula above is extended to all mesurable sets $A$. Indeed (\ref{levelset}) leads to
$$P\{\zeta(X)\leq s\}P\{\zeta(X)\geq s+t\}\leq \rho (t)$$
so that, given a median $M$ of $\zeta(X)$, using this inequality with $s=M$ or $s=M-t$ alternatively, implies that
\begin{equation}
P\{\zeta(X)\leq M-t\}\vee P\{\zeta(X)\geq M+t\}\leq 2\rho (t). \label{medianconc}
\end{equation}
The remarkable thing is that for independent variables $X_1,X_2,\dots X_n$, Talagrand's convex distance inequality provides a universal sub-gaussian control of~$\rho$.
\begin{theorem} 
Let $X_1,X_2,\dots,X_n$ be independent random variables and set $X=(X_1,X_2,\dots, X_n)$, for all non negative real number $t$
$$\sup_{A}P\{X\in A\}P\{d_T(X,A)\geq t\}\leq \exp {(-t^2/4)}$$
where the supremum is taken over all measurable subsets of $\mathcal X^n$.
\end{theorem}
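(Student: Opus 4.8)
The plan is to pass to the equivalent exponential-moment form of the inequality and then to run an induction on the number of coordinates. By Markov's inequality applied to the variable $\exp\!\big(d_T(X,A)^2/4\big)$, it suffices to prove the functional inequality
\[
\mathbb{E}_P\!\left[\exp\!\Big(\tfrac14\,d_T(X,A)^2\Big)\right]\ \le\ \frac{1}{P\{X\in A\}}
\]
for every measurable $A$; indeed this yields $P\{X\in A\}\,P\{d_T(X,A)\ge t\}\le e^{-t^2/4}\,P\{X\in A\}\,\mathbb{E}_P\big[e^{d_T(X,A)^2/4}\big]\le e^{-t^2/4}$, and one then takes the supremum over $A$. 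I would prove the functional inequality by induction on $n$. The case $n=1$ is elementary: there $d_T(x,A)=\mathbf 1_{x\notin A}$, so with $p=P\{X_1\in A\}$ the claim reduces to $(1-e^{1/4})p^{2}+e^{1/4}p-1\le 0$ on $[0,1]$, an elementary inequality (the left-hand side factorizes with both roots $\ge 1$).

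The bridge to optimal transportation — and the engine of the induction — is the dual description of the convex distance furnished by Sion's minimax theorem. In \eqref{dtal} the inner infimum over points of $A$ equals the infimum, over probability measures $\nu$ on $A$, of the affine functional $\nu\mapsto\sum_i\alpha_i\,\nu\{y:\,y_i\ne x_i\}$ (attained at a Dirac mass), so that, exchanging $\sup$ and $\inf$,
\[
d_T(x,A)=\inf_{\nu\in\mathcal P(A)}\Big(\sum_{i=1}^{n}\nu\{y:\,y_i\ne x_i\}^{2}\Big)^{1/2};
\]
thus $d_T(x,A)$ is the optimal cost of transporting $\delta_x$ onto a measure supported on $A$, where a coupling $\nu$ is charged the Euclidean norm of its vector of marginal disagreement probabilities. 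Write $\mathcal X^{n}=\mathcal X^{n-1}\times\mathcal X_n$, $P=P'\otimes P_n$, $x=(x',\omega)$, and slice $A$ into $A^{(\omega)}=\{x':(x',\omega)\in A\}$ with projection $\bar A=\bigcup_{\omega}A^{(\omega)}$. Feeding into the displayed formula the convex combination $(1-\theta)\nu_1+\theta\nu_2$ of a near-optimal plan $\nu_1$ for $(x',A^{(\omega)})$, lifted so as to keep the last coordinate equal to $\omega$, and a near-optimal plan $\nu_2$ for $(x',\bar A)$, lifted by a measurable choice of a point of $A$ lying above each point of $\bar A$, one reads off the disagreement probabilities of the combined plan and applies Minkowski's inequality over the first $n-1$ coordinates (the last coordinate contributing at most $\theta^{2}$ to the squared norm) to obtain the subadditive recursion
\[
d_T(x,A)^{2}\ \le\ \inf_{0\le\theta\le 1}\Big\{\theta^{2}+\big((1-\theta)\,d_T(x',A^{(\omega)})+\theta\,d_T(x',\bar A)\big)^{2}\Big\}.
\]

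To close the induction, exponentiate this recursion, use $\big((1-\theta)a+\theta b\big)^{2}\le(1-\theta)a^{2}+\theta b^{2}$, and combine Hölder's inequality with conjugate exponents $\tfrac1{1-\theta},\tfrac1\theta$ with the induction hypothesis in dimension $n-1$, applied to $A^{(\omega)}$ and to $\bar A$; this gives, for any choice $\theta=\theta(\omega)\in[0,1]$,
\[
\int \exp\!\Big(\tfrac14\,d_T\big((x',\omega),A\big)^{2}\Big)\,dP'(x')\ \le\ e^{\theta^{2}/4}\;P'(A^{(\omega)})^{\,\theta-1}\,P'(\bar A)^{-\theta}.
\]
Setting $r(\omega)=P'(A^{(\omega)})/P'(\bar A)\in[0,1]$, choose $\theta(\omega)$ to realize the sharp one-variable bound $\min_{0\le\theta\le 1}e^{\theta^{2}/4}r^{\theta-1}\le 2-r$ (take $\theta=-2\ln r$ when it lies in $[0,1]$, and $\theta=1$ otherwise). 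Integrating over $\omega\sim P_n$ and using $\int r\,dP_n=P(A)/P'(\bar A)$ then yields
\[
\mathbb{E}_P\!\left[e^{d_T(X,A)^{2}/4}\right]\ \le\ \frac{2P'(\bar A)-P(A)}{P'(\bar A)^{2}}\ \le\ \frac{1}{P(A)},
\]
the last step being the identity $\tfrac1{P(A)}-\tfrac{2P'(\bar A)-P(A)}{P'(\bar A)^{2}}=\tfrac{(P'(\bar A)-P(A))^{2}}{P(A)\,P'(\bar A)^{2}}\ge 0$. This completes the induction, with the optimal constant $4$.

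A word, finally, on the announced philosophy and on where I expect the real difficulty to be. The one-variable bound $\min_{\theta}e^{\theta^{2}/4}r^{\theta-1}\le 2-r$ is exactly a Legendre--Young duality — the baby case of the Donsker--Varadhan variational formula for entropy, with $\theta$ the dual variable — and, equivalently, the whole tensorization can be carried out at the level of a transport--entropy inequality $\int d_T(\cdot,A)\,dQ\le\varphi\big(\mathrm{KL}(Q\|P)\big)$, the exponential bound being then recovered through that variational formula; this is the route consonant with the philosophy announced above. The step I expect to be the main obstacle is the closing of the induction: the recursion re-closes to $1/P(A)$ only because the one-variable estimate $2-r$ is sharp, so one must identify the correct $\theta(\omega)$ and verify the clean algebraic identity that produces exactly the constant $4$ — any slack there would be amplified upon iteration. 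By comparison, the minimax reformulation and the coordinate slicing are routine, modulo mild regularity (reducing to finitely supported $\nu$ in Sion's theorem, and a measurable selection of a point of $A$ above each point of $\bar A$).
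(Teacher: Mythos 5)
Your proof is correct, but it is not the route the paper takes: you reproduce Talagrand's original induction on the number of coordinates, establishing the stronger exponential-moment form $\mathbb{E}_P\bigl[e^{d_T(X,A)^2/4}\bigr]\le 1/P\{X\in A\}$ via the minimax representation of $d_T$, the subadditive recursion over a coordinate slicing, H\"older with exponents $\tfrac1{1-\theta},\tfrac1\theta$, and the sharp one-variable bound $\min_\theta e^{\theta^2/4}r^{\theta-1}\le 2-r$. The paper instead \emph{deduces} the convex distance inequality from Marton's conditional transportation inequality (Theorem \ref{zmarton2}, taken from Samson) combined with the variational formula for entropy: it first obtains the mean-concentration bound of Corollary \ref{ztail-marton}, observes that $d_T(\cdot,A)$ itself satisfies condition $(\mathcal C_1)$, and then combines the right-tail bound (after the elementary manipulation $x^2\ge -\theta^2+(x+\theta)^2/2$ with $\theta=\mathbb{E}\,d_T(X,A)$) with the left-tail bound at $x=\theta$, which gives $P\{X\in A\}\le e^{-\theta^2/2}$. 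What your approach buys is self-containedness and the optimal constant in the exponential-moment form, at the price of the delicate closing step you correctly identify (the $2-r$ lemma and the algebraic identity $\tfrac1{P(A)}-\tfrac{2P'(\bar A)-P(A)}{P'(\bar A)^2}=\tfrac{(P'(\bar A)-P(A))^2}{P(A)P'(\bar A)^2}$); what the paper's route buys is brevity given Marton's inequality as input, the conceptual link to optimal transport that is the article's theme, and the fact that the intermediate result (concentration of $(\mathcal C_v)$ functions around the mean with constant $2v$) is strictly stronger than the convex distance inequality and is the tool actually used in the statistical part. Your closing remark that the whole tensorization could alternatively be run through a transport--entropy inequality recovered by the variational formula is precisely the paper's philosophy, so you correctly anticipated the alternative even though you executed the other proof.
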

It is easy to relax the normalization constraint on the function $\zeta$ that we have used above. Given some function $\zeta$ 
satisfying to condition ($\mathcal {C}_v$) and combining Talagrand's convex distance inequality with 
inequality (\ref{medianconc}) (used for $\zeta/\sqrt{v}$ instead of $\zeta$) leads to the following immediate consequence.
\begin{corollary} 
\label{medconc}
Let $\zeta$ satisfying regularity condition ($\mathcal {C}_v$), and $X_1,X_2,\dots,X_n$ be independent random variables. Setting $Z=\zeta(X_1,X_2,\dots,X_n)$, if $M$ is a median of $Z$, then for all non negative real number $t$
$$P\{Z-M\leq -t\}\vee P\{Z-M\geq t\}\leq 2\exp (-t^2/(4v)).$$
\end{corollary}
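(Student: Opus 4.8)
The plan is to reduce the statement to the normalized case $v=1$ that was already analyzed in the discussion preceding Theorem~1, by applying that discussion to the rescaled function $\tilde\zeta=\zeta/\sqrt{v}$. First I would check that $\tilde\zeta$ satisfies condition $(\mathcal{C}_1)$: dividing inequality (\ref{el1lip}) by $\sqrt{v}$ exhibits coefficients $\tilde c_i=c_i/\sqrt{v}$ with $\big\|\sum_{i=1}^n \tilde c_i^2\big\|_\infty=v^{-1}\big\|\sum_{i=1}^n c_i^2\big\|_\infty\leq 1$. Consequently the level-set inclusion (\ref{levelset}) applies verbatim to $\tilde\zeta$, and the whole chain of reasoning leading to (\ref{medianconc}) is available for $\tilde\zeta$.

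Next I would feed Talagrand's convex distance inequality (Theorem~1) into that mechanism. Since $X_1,\dots,X_n$ are independent, Theorem~1 gives $\rho(t)=\sup_A P\{X\in A\}P\{d_T(X,A)\geq t\}\leq \exp(-t^2/4)$, so (\ref{medianconc}) applied to $\tilde\zeta$ yields, for any median $\tilde M$ of $\tilde Z:=\tilde\zeta(X_1,\dots,X_n)$,
$$P\{\tilde Z-\tilde M\leq -t\}\vee P\{\tilde Z-\tilde M\geq t\}\leq 2\exp(-t^2/4),\qquad t\geq 0.$$

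Finally I would translate back. If $M$ is a median of $Z=\zeta(X)$, then $\tilde M=M/\sqrt{v}$ is a median of $\tilde Z=Z/\sqrt{v}$, since multiplication by the positive constant $1/\sqrt{v}$ preserves the defining inequalities $P\{Z\leq M\}\geq 1/2$ and $P\{Z\geq M\}\geq 1/2$. Substituting $t=u/\sqrt{v}$ in the display above and multiplying the events through by $\sqrt{v}$ gives
$$P\{Z-M\leq -u\}\vee P\{Z-M\geq u\}\leq 2\exp\big(-u^2/(4v)\big),\qquad u\geq 0,$$
which is exactly the claim. There is essentially no obstacle here: the only point requiring care is the bookkeeping of the rescaling (keeping track of where $\sqrt{v}$ versus $v$ appears) together with the elementary remark that medians transform covariantly under multiplication by a positive scalar; everything substantive is quoted from Theorem~1 and the already-established inequality (\ref{medianconc}).
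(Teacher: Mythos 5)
Your proof is correct and follows exactly the route the paper indicates: apply the median-concentration mechanism (\ref{medianconc}) to the rescaled function $\zeta/\sqrt{v}$, feed in Theorem~1 to bound $\rho(t)$, and translate back via the substitution $t=u/\sqrt{v}$. The paper treats this as an immediate consequence without writing out the details; your bookkeeping of the rescaling and of how medians transform is accurate.
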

Of course, from this concentration inequality of $\zeta(X_1,X_2,\dots,X_n)$ around the median, it is possible to deduce a concentration 
inequality around the expectation, possibly with slightly worse constants. As a matter of fact, it is better to take an alternative
route. More precisely, starting from a proper transportation inequality one can prove a concentration inequality around the expectation 
under the same regularity condition as above with neat constants. As a bonus we shall see that it will also provide a simple proof of 
Talagrand's convex distance inequality. 
\subsection{Marton's transportation inequality in action}
The link between optimal transportation and concentration has been pointed out by Katalin Marton in a series of papers (see \cite{Marton86},\cite{Marton96} and \cite{Marton96b}). Let us give a few lines of explanation based on the variational formula for entropy. Let $Q$ be some
probability distribution which is absolutely continuous with respect to
$P^{n}$. Let $\mathbb{P}$ be some probability distribution, coupling $P^{n}$
to $Q$, which merely means that it is a probability distribution on with first marginal $P^n$ and second marginal $Q$. Let $\zeta$ be some function satisfying condition ($\mathcal{C}_{v}$). Then we may write%
\[
E_{Q}\left(  \zeta\right)  -E_{P^{n}}\left(  \zeta\right)  =\mathbb{E}_{\mathbb{P}%
}\left[  \zeta\left(  Y\right)  -\zeta\left(  X\right)  \right]  \leq%
{\displaystyle\sum\limits_{i=1}^{n}}
\mathbb{E}_{\mathbb{P}}\left[  c_{i}\left(  Y\right)  \mathbb{P}\left\{
X_{i}\neq Y_{i}\mid Y\right\}  \right],
\]
which implies by applying Cauchy-Schwarz inequality twice%
\begin{align*}
E_{Q}\left(  \zeta\right)  -E_{P^{n}}\left(  \zeta\right)   &  \leq%
{\displaystyle\sum\limits_{i=1}^{n}}
\left(  \mathbb{E}_{\mathbb{P}}\left[  c_{i}^{2}\left(  Y\right)  \right]
\right)  ^{1/2}\left(  \mathbb{E}_{\mathbb{P}}\left[  \mathbb{P}^{2}\left\{
X_{i}\neq Y_{i}\mid Y\right\}  \right]  \right)  ^{1/2}\\
&  \leq\left(
{\displaystyle\sum\limits_{i=1}^{n}}
\mathbb{E}_{\mathbb{P}}\left[  c_{i}^{2}\left(  Y\right)  \right]  \right)
^{1/2}\left(
{\displaystyle\sum\limits_{i=1}^{n}}
\mathbb{E}_{\mathbb{P}}\left[  \mathbb{P}^{2}\left\{  X_{i}\neq Y_{i}\mid
Y\right\}  \right]  \right)  ^{1/2}\text{.}%
\end{align*}
We derive from this inequality that
\[
E_{Q}\left(  \zeta\right)  -E_{P^{n}}\left(  \zeta\right)  \leq\sqrt{v}\left(
\inf_{\mathbb{P\in}\mathcal{P}\left(  P^{n},Q\right)  }%
{\displaystyle\sum\limits_{i=1}^{n}}
\mathbb{E}_{\mathbb{P}}\left[  \mathbb{P}^{2}\left\{  X_{i}\neq Y_{i}\mid Y\right\}  \right]  \right)  ^{1/2}\text{,}%
\]
where $\mathcal{P}\left(  P^{n},Q\right)$ denotes the set all probability distributions $\mathbb P$, coupling $Q$ to $P^n$. Of course, exchanging the roles of $X$ and $Y$, a similar inequality holds for $-f$ instead of $\zeta$, more precisely 

\[
-E_{Q}\left(  \zeta\right)  +E_{P^{n}}\left(  \zeta\right)  \leq\sqrt{v}\left(
\inf_{\mathbb{P\in}\mathcal{P}\left(  P^{n},Q\right)  }%
{\displaystyle\sum\limits_{i=1}^{n}}
\mathbb{E}_{\mathbb{P}}\left[  \mathbb{P}^{2}\left\{  X_{i}\neq Y_{i}\mid X
\right\}  \right]  \right)  ^{1/2}.%
\]
Marton's following beautiful result tells us what happens when the coupling is chosen in a clever way. The version provided below (in which a symmetric conditioning with respect to $X$ and $Y$ is involved)  is due to Paul-Marie Samson (see \cite{Samson}). 
\begin{theorem}
\label{zmarton2}\textbf{(Marton's conditional transportation inequality)} Let
$P^{n}$ be some product probability distribution on some product measurable
space $\mathcal{X}^{n}$and $Q$ be some probability measure absolutely
continuous with respect to $P^{n}$. Then
\[
\min_{\mathbb{P}\in\mathcal{P}\left(  P^{n},Q\right)  }\mathbb{E}_{\mathbb{P}%
}\left[  \sum_{i=1}^{n}\mathbb{P}^{2}\left\{  X_{i}\neq Y_{i}\mid
X\right\}  +\mathbb{P}^{2}\left\{  X_{i}\neq Y_{i}\mid Y\right\}
\right]  \leq2D\left(  Q\parallel P^{n}\right)  \text{,}%
\]
where $\left(  X_{i},Y_{i}\right)  $, $1\leq i\leq n$ denote the coordinate
mappings on $\mathcal{X}^{n}\times\mathcal{X}^{n}$ and $D\left(  Q\parallel P^{n}\right)$ denotes the Kullback-Leibler divergence of $Q$ from $P^{n}$.
\end{theorem}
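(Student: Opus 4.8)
The plan is to prove the inequality by induction on the number of coordinates $n$, constructing at each stage an explicit coupling out of one-dimensional maximal couplings and propagating the bound by means of the chain rule for relative entropy. Since all that is needed here — and all that the statistical applications use — is the existence of a coupling realizing the stated control, I will simply exhibit a coupling $\mathbb{P}\in\mathcal{P}(P^{n},Q)$ with $\mathbb{E}_{\mathbb{P}}\big[\sum_{i}\mathbb{P}^{2}\{X_{i}\neq Y_{i}\mid X\}+\mathbb{P}^{2}\{X_{i}\neq Y_{i}\mid Y\}\big]\leq 2D(Q\parallel P^{n})$; that the infimum is actually attained can be recovered afterwards by a routine compactness argument. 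We may also assume $D(Q\parallel P^{n})<\infty$, the inequality being trivial otherwise.

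\textbf{Base case $n=1$.} Here I would take $\mathbb{P}$ to be the maximal coupling of $P$ and $Q$, the one for which $\mathbb{P}\{X=Y\}$ equals $1$ minus the total variation distance between $P$ and $Q$. Writing $h=dQ/dP$, a direct computation with this coupling gives $\mathbb{P}\{X\neq Y\mid X=x\}=(1-h(x))_{+}$ and $\mathbb{P}\{X\neq Y\mid Y=y\}=(1-1/h(y))_{+}$, whence
\[
\mathbb{E}_{\mathbb{P}}\left[\mathbb{P}^{2}\{X\neq Y\mid X\}+\mathbb{P}^{2}\{X\neq Y\mid Y\}\right]=\int_{\{h<1\}}(1-h)^{2}\,dP+\int_{\{h>1\}}\frac{(h-1)^{2}}{h}\,dP.
\]
Since $\int(h-1)\,dP=0$ one has $D(Q\parallel P)=\int\psi(h)\,dP$ with $\psi(u)=u\log u-u+1\geq 0$, so the claim for $n=1$ reduces to the two scalar inequalities $(1-u)^{2}\leq 2\psi(u)$ on $(0,1]$ and $(u-1)^{2}/u\leq 2\psi(u)$ on $[1,\infty)$. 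Each follows by observing that the two sides agree up to second order at $u=1$ and that the derivative of the difference has a constant sign, which is settled by a one-line convexity argument on its second derivative.

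\textbf{Inductive step.} Split $\mathcal{X}^{n}=\mathcal{X}_{1}\times\mathcal{X}_{2}\times\cdots\times\mathcal{X}_{n}$ into its first factor and the rest, and decompose $Q$ into the marginal $Q_{1}$ of its first coordinate and the conditional kernels $Q^{y_{1}}$ of the last $n-1$ coordinates given that the first equals $y_{1}$. Let $\mathbb{P}_{1}$ be the maximal coupling of $P_{1}$ and $Q_{1}$, and, for each $y_{1}$, let $\mathbb{P}^{y_{1}}$ be a coupling of $P_{2}\otimes\cdots\otimes P_{n}$ with $Q^{y_{1}}$ furnished by the induction hypothesis. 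Define $\mathbb{P}$ by first drawing $(X_{1},Y_{1})\sim\mathbb{P}_{1}$ and then, conditionally, drawing the remaining $2n-2$ coordinates from $\mathbb{P}^{Y_{1}}$ — a function of $Y_{1}$ only. Because the first marginal of every $\mathbb{P}^{y_{1}}$ is $P_{2}\otimes\cdots\otimes P_{n}$ regardless of $y_{1}$, one checks that $\mathbb{P}$ has marginals $P^{n}$ and $Q$, and — this is the point — that under $\mathbb{P}$ the conditional law of $Y_{1}$ given the full vector $X$ equals its conditional law given $X_{1}$ alone, while the conditional law of $(Y_{2},\dots,Y_{n})$ given $X$ and $Y_{1}$ equals the $\mathbb{P}^{Y_{1}}$-conditional law of $(Y_{2},\dots,Y_{n})$ given $(X_{2},\dots,X_{n})$ alone. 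Granting these identities, the two $i=1$ contributions collapse exactly to the $n=1$ quantity for $\mathbb{P}_{1}$, hence are $\leq 2D(Q_{1}\parallel P_{1})$; for $i\geq 2$ the $Y$-conditioned terms collapse exactly to the corresponding $\mathbb{P}^{Y_{1}}$-terms, while the $X$-conditioned terms cost one application of conditional Jensen (convexity of $t\mapsto t^{2}$) to pass from the mixture over $Y_{1}$ to the conditional probabilities of $\mathbb{P}^{Y_{1}}$. Summing over $i\geq 2$, taking expectations, and invoking the induction hypothesis for each $y_{1}$ bounds this block by $2\,\mathbb{E}_{Y_{1}\sim Q_{1}}D(Q^{Y_{1}}\parallel P_{2}\otimes\cdots\otimes P_{n})$; adding the $i=1$ bound and using the chain rule $D(Q\parallel P^{n})=D(Q_{1}\parallel P_{1})+\mathbb{E}_{Y_{1}\sim Q_{1}}D(Q^{Y_{1}}\parallel P_{2}\otimes\cdots\otimes P_{n})$ closes the induction.

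The main obstacle is the bookkeeping in the inductive step, precisely because the statement conditions on the \emph{whole} random vectors $X$ and $Y$ rather than on past coordinates. What makes the recursion go through is exactly the choice to draw the tail from $\mathbb{P}^{Y_{1}}$ — depending on $Y_{1}$ but not on $X_{1}$ — which forces the relevant conditional laws to degenerate as claimed and limits the loss to that single, harmless Jensen step; verifying these conditional-independence identities cleanly (most transparently via densities with respect to a reference product measure) is the delicate part. Everything else is either the one-dimensional computation of the base case or the entropy chain rule.
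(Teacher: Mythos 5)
Your argument is correct, and it is essentially the classical proof of Marton and Samson: the paper does not prove Theorem \ref{zmarton2} at all but quotes it from Samson (2000) (see also the Boucheron--Lugosi--Massart monograph cited in the text), where exactly your induction is carried out --- maximal coupling in dimension one, the scalar inequalities $(1-u)^{2}\leq 2\psi(u)$ on $(0,1]$ and $(u-1)^{2}/u\leq 2\psi(u)$ on $[1,\infty)$ with $\psi(u)=u\log u-u+1$, then the entropy chain rule together with the single conditional Jensen step for the $X$-conditioned terms with $i\geq 2$. The base-case computation and the conditional-independence identities you invoke in the inductive step all check out (the factorization of the joint density as $p_{1}(x_{1},y_{1})\,p^{y_{1}}(x_{2:n},y_{2:n})$, with $x_{2:n}$-marginal of $p^{y_{1}}$ equal to the fixed product, yields each of them). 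The only genuine gap you leave open is the measurable dependence of the couplings $\mathbb{P}^{y_{1}}$ on $y_{1}$, needed to glue them into a bona fide probability measure; this is standard but should be addressed, most easily by running the whole induction at the level of densities with respect to a reference product measure, which also disposes of your appeal to compactness since the explicit coupling you build already witnesses the bound and attainment of the minimum is not needed for any of the paper's applications.
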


Now we can forget about the way the optimal coupling has been designed and focus on what gives us the combination between Theorem \ref{zmarton2} and the preceding inequalities. If we do so, we end up with the following inequality 
\begin{equation}
E_{Q}\left(  \zeta\right)  -E_{P^{n}}\left(  \zeta\right)  \leq \sqrt{2vD\left(  Q\parallel P^{n}\right)}\text{,} \label{fundconc}%
\end{equation}
which holds true for any probability distribution $Q$ which is absolutely continuous with respect to $P^n$ (the same inequality remaining true for $-\zeta$ instead of $\zeta$). It remains to connect this 
inequality with concentration, which can done thanks to a very simple but powerful engine: the variational formula for entropy. This 
formula is also well known in statistical mechanics, which is another domain of interest of Patrick Cattiaux. Let us briefly recall 
what this formula says for a random variable $\xi$ on some probability space $(\Omega , \mathcal A,P)$
\begin{equation}
\log E_{P}\left( e^{\xi}\right)=\sup_{Q\ll P} \left (E_{Q}(\xi) -D\left(  Q\parallel P\right)\right)\text{.} \label{varia}%
\end{equation}
How to use it? The trick is to rewrite (\ref {fundconc}) differently. Noticing that for any non negative real number $a$ 
\begin{equation}
\inf _{\lambda>0} \left(\frac{a}{\lambda}+\frac{\lambda v}{2}\right)=\sqrt{2av} \label{dualeinverse}
\end{equation}
and using (\ref{dualeinverse}) with $a=D(Q\parallel P^{n})$, inequality 
(\ref{fundconc}) means that for any positive $\lambda$
$$
\sup_{Q\ll P^n} [\lambda (E_{Q}(\zeta)-E_{P^{n}}(\zeta))- D(Q\parallel P^{n})] \leq \frac{\lambda^2v}{2} \text{.}
$$
It remains to combine this inequality with the variational formula (\ref{varia}) applied to the random variable 
$\xi =\lambda (f-E_{P^n}(f))$ to derive that for any positive $\lambda$
\[
\log E_{P^n}\left( e^{\lambda (\zeta-E_{P^n}(\zeta))}\right)\leq \frac{\lambda^2v}{2}. 
\]
Since, the same inequality holds true for $-\zeta$ instead of $\zeta$, this means that it actually holds for any real number $\lambda$. 
Applying Chernoff's inequality leads to the following concentration result around the mean which is the analogue of the preceding concentration inequality around the median apart from the fact that numerical constants are slightly different. 
\begin{corollary}
\label{ztail-marton}Let $\zeta$ satisfying regularity condition ($\mathcal {C}_v$), and $X_1,X_2,\dots,X_n$ be independent random variables. Setting $Z=\zeta(X_1,X_2,\dots ,X_n)$, then for all non negative real number $t$
$$P\{Z-EZ\leq -t\}\vee P\{Z-EZ\geq t\}\leq \exp (-t^2/(2v)).$$
\end{corollary}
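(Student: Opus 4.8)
The plan is to promote the transportation inequality (\ref{fundconc}) to a bound on the logarithmic Laplace transform of $Z$ and then conclude by Chernoff's inequality, exactly along the lines indicated just above the statement. Throughout, $\zeta$ is viewed on the product probability space $(\mathcal X^{n},\mathcal A^{n},P^{n})$ associated with $(X_1,\dots,X_n)$, so that $EZ=E_{P^{n}}(\zeta)$, and condition ($\mathcal C_v$) enters only through (\ref{fundconc}).

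First I would fix a real number $\lambda>0$ and use (\ref{dualeinverse}) with $a=D(Q\parallel P^{n})$, which gives in particular $\sqrt{2vD(Q\parallel P^{n})}\leq D(Q\parallel P^{n})/\lambda+\lambda v/2$. Combining this with (\ref{fundconc}) and multiplying by $\lambda$ yields, for every probability measure $Q\ll P^{n}$,
\[
\lambda\bigl(E_{Q}(\zeta)-E_{P^{n}}(\zeta)\bigr)-D(Q\parallel P^{n})\leq\frac{\lambda^{2}v}{2}.
\]
Taking the supremum over all such $Q$ and identifying the left-hand side through the variational formula for entropy (\ref{varia}), applied to $\xi=\lambda\bigl(\zeta-E_{P^{n}}(\zeta)\bigr)$, I would obtain
\[
\log E_{P^{n}}\Bigl(e^{\lambda(Z-EZ)}\Bigr)\leq\frac{\lambda^{2}v}{2},\qquad\lambda>0.
\]
Next I would use that (\ref{fundconc}) was noted to hold verbatim with $-\zeta$ in place of $\zeta$ — this is the second displayed inequality coming out of Marton's theorem, namely $E_{P^{n}}(\zeta)-E_{Q}(\zeta)\leq\sqrt{2vD(Q\parallel P^{n})}$. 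Repeating the previous step with $-\zeta$ then produces the same bound on $\log E_{P^{n}}(e^{-\lambda(Z-EZ)})$ for every $\lambda>0$, so that altogether
\[
\log E_{P^{n}}\Bigl(e^{\lambda(Z-EZ)}\Bigr)\leq\frac{\lambda^{2}v}{2}\qquad\text{for every }\lambda\in\mathbb{R}.
\]
Finally, for $t\geq0$ and $\lambda>0$, Chernoff's inequality gives $P\{Z-EZ\geq t\}\leq\exp\bigl(-\lambda t+\lambda^{2}v/2\bigr)$, and the choice $\lambda=t/v$ yields the bound $\exp(-t^{2}/(2v))$; applying the same estimate to $-(Z-EZ)$, using the bound for negative $\lambda$, controls the lower tail, which proves the corollary.

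I do not expect a serious obstacle here beyond bookkeeping. The one point that deserves a word of justification is that the exponential moments $E_{P^{n}}(e^{\lambda Z})$ are finite, so that the Laplace transform is well defined and the use of the variational formula is licit: in the situations of interest — and in particular in the Rademacher framework of the paper, where $\zeta$ is bounded on the (essentially bounded) product space — this is immediate, and in general one may first run the argument for a bounded truncation of $\zeta$ and then let the truncation level tend to infinity. It is also worth stressing, as above, that the two-sided conclusion does not require any symmetry of $\zeta$, since the $-\zeta$ version of (\ref{fundconc}) is a genuine output of the Marton–Samson coupling rather than a consequence of $-\zeta$ satisfying ($\mathcal C_v$).
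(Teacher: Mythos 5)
Your proposal is correct and follows essentially the same route as the paper: dualize the transportation bound (\ref{fundconc}) via (\ref{dualeinverse}), identify the resulting supremum over $Q$ with the log-Laplace transform through the variational formula (\ref{varia}), repeat for $-\zeta$ to cover negative $\lambda$, and finish with Chernoff's inequality. The additional remarks on finiteness of exponential moments and on the two-sided nature of (\ref{fundconc}) are sound but not needed beyond what the paper already does implicitly.
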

Interestingly, as pointed out in \cite{BLMbook}, this result strictly implies Talagrand's convex distance inequality (and therefore 
Corollary \ref{medconc}). In other words, Marton's transportation inequality implies Talagrand's convex distance inequality.  
\subsection{The convex distance inequality revisited}
The key is that given any measurable subset $A$ of $\mathcal X^n$, Talagrand's convex distance $d_T(.,A)$ itself satisfies condition ($\mathcal C_1$). Indeed 
if $c\left(  x\right)  $ is a vector of $\mathcal B_n^+$ for which the supremum
in formula (\ref{dtal}) is achieved (which does exist since an upper semi-continuous function achieves a maximum on a compact set), we have%
\begin{align*}
d_{T}\left(  x,A\right)  -d_{T}\left(  y,A\right)    & \leq\inf_{x^{\prime}\in
A}\sum_{i=1}^{n}c_{i}\left(  x\right)  1\!\mathrm{l}_{x_{i}\neq x_{i}^{\prime
}}-\inf_{y^{\prime}\in A}\sum_{i=1}^{n}c_{i}\left(  x\right)  1\!\mathrm{l}%
_{y_{i}\neq y_{i}^{\prime}}\\
& \leq\sum_{i=1}^{n}c_{i}\left(  x\right)  1\!\mathrm{l}_{x_{i}\neq y_{i}}%
\end{align*}
with $\left\Vert \sum_{i=1}^{n}c_{i}^{2}\right\Vert _{\infty}\leq1$. This
means that $d_{T}\left(  .,A\right)  $ satisfies to condition ($\mathcal C_1$
) and Corollary \ref{ztail-marton} merely applies to $d_{T}\left(
X,A\right)$. It turns out
that this property strictly implies Talagrand's convex distance inequality.
Indeed, setting $Z=d_{T}\left(  .,A\right)  $ and $\theta=EZ$ by the right-tail bound provided by Corollary \ref{ztail-marton}
\[
P\left\{  Z-\theta\geq x\right\}  \leq\exp\left(  -\frac{x^{2}}{2}\right).
\]
Noticing that $x^{2}\geq-\theta^{2}+\left(  x+\theta\right)  ^{2}/2$, this
upper tail inequality a fortiori leads to
\begin{equation}
P\left\{  Z-\theta\geq x\right\}  \leq\exp\left(  \frac{\theta^{2}}{2}\right)
\exp\left(  -\frac{\left(  x+\theta\right)  ^{2}}{4}\right)  \text{.}%
\label{eright}%
\end{equation}
Setting $x=t-\theta$, this inequality can also imply that for positive $t$%
\[
P\left\{  Z\geq t\right\}  \leq\exp\left(  \frac{\theta^{2}}{2}\right)
\exp\left(  -\frac{t^{2}}{4}\right)
\]
(notice that this bound is trivial whenever $t\leq\theta$ and therefore we may
always assume that $t>\theta$ which warrants that $x>0$). On the other hand,
using the left-tail bound
\[
P\left\{  \theta-Z\geq x\right\}  \leq\exp\left(  -\frac{x^{2}}{2}\right)
\]
with $x=\theta$, we derive that%
\begin{equation}
P\left\{  X\in A\right\}  =P\left\{  Z=0\right\}  \leq\exp\left(
-\frac{\theta^{2}}{2}\right)  \text{.}\label{eleft}%
\end{equation}
Combining (\ref{eright}) with (\ref{eleft}) leads to%
\[
P\left\{  X\in A\right\}  P\left\{  Z\geq t\right\}  \leq\exp\left(
-\frac{t^{2}}{4}\right)
\]
which is precisely Talagrand's convex distance inequality.

\subsection{Application to Rademacher processes}
Recalling that a Rademacher variable is merely a uniformly distributed random 
sign, if $\epsilon_1,\epsilon_2,\ldots,\epsilon_n$ are independent Rademacher 
variables and if $B$ is a subset of $\mathbb R^n$, a Rademacher process is 
nothing else that $b\rightarrow \langle b,\epsilon \rangle$, where 
$\langle .,. \rangle$ denotes the canonical scalar product. The quantity of 
interest here is the supremum of such a process: 
$Z=\sup_{b\in B} \langle b,\epsilon \rangle$. 
One knows from Hoeffding's 
inequality (see \cite{Hoeff}) that for each given vector $b$ with Euclidean norm and all positive real number $t$
\begin{equation}
	P\{\langle b,\epsilon \rangle\geq t\}\leq \exp \left(-t^2/2\right). \label{ehoeff}
\end{equation} 
If $B$ is a subset of the unit closed Euclidean ball $\mathcal B_n$ of 
$\mathbb R^n$, and if one wants to prove a similar sub-Gaussian inequality for 
$Z-EZ$ and $EZ-Z$, where $Z=\sup_{b\in B} \langle b,\epsilon \rangle$, this a 
typical situation where the preceding theory applies. 
Consider the function $\zeta$, defined on $[-1,1]^n$ by 
$$\zeta:x \rightarrow \sup_{b\in B}\langle b,x \rangle.$$
Then $\zeta$ satisfies to condition ($\mathcal C_v$) with $v=4$. Indeed, possibly 
changing $B$ into its closure, one can always assume that $B$ is compact. 
Hence, there exists some point $b(x)$ 
belonging to $B$ such that $\zeta(x)=\langle b(x),x \rangle$ and therefore since $x$ 
and $y$ belong to $[-1,1]^n$
$$\zeta(x)-\zeta(y)\leq \langle b(x),x \rangle - \langle b(x),y \rangle=\langle b(x),x-y \rangle\leq 2 \sum_{i=1}^{n}\vert b_{i}\left(  x\right)\vert 1\!\mathrm{l}_{x_{i}\neq y_{i}},$$
which clearly means that $\zeta$ satisfies to ($\mathcal C_4$). Recalling that a Rademacher variable is merely a uniformly distributed random sign, applying Corollary \ref{ztail-marton} we derive the following concentration result for the supremum of a Rademacher process $Z=\sup_{b\in B}\langle b,\epsilon \rangle$.
\begin{proposition}
	
\label{rademacher}Let $B$ be some subset of the unit closed Euclidean ball of $\mathbb R^n$ and $\epsilon_1,\epsilon_2,\ldots,\epsilon_n$ be independent Rademacher random variables. Setting $Z=\sup_{b\in B}\langle b,\epsilon \rangle$, then for all non negative real number $t$
$$P\{Z-EZ\leq -t\}\vee P\{Z-EZ\geq t\}\leq \exp (-t^2/8).$$
\end{proposition}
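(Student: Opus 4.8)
The plan is to exhibit $Z$ as $\zeta(\epsilon_1,\dots,\epsilon_n)$ for a function $\zeta$ satisfying the bounded differences condition in quadratic mean with constant $v=4$, and then to quote Corollary~\ref{ztail-marton} verbatim. Concretely, I would work with the function $\zeta$ defined on the cube $[-1,1]^n$ by $\zeta(x)=\sup_{b\in B}\langle b,x\rangle$. Replacing $B$ by its closure changes neither $\zeta$ (the supremum of the continuous map $b\mapsto\langle b,x\rangle$ over $B$ and over $\overline B$ coincide) nor the random variable $Z$, and $\overline B$ is still contained in $\mathcal B_n$ since $\mathcal B_n$ is closed; so one may assume $B$ compact, and then for each $x$ the supremum defining $\zeta(x)$ is attained at some $b(x)\in B$.

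Next I would verify $(\mathcal C_4)$. For $x,y\in[-1,1]^n$, picking a maximizer $b(x)$ associated with $x$,
$$\zeta(x)-\zeta(y)\;\le\;\langle b(x),x\rangle-\langle b(x),y\rangle\;=\;\sum_{i=1}^{n}b_i(x)\,(x_i-y_i).$$
The $i$-th term vanishes when $x_i=y_i$, and when $x_i\neq y_i$ one has $|x_i-y_i|\le 2$ because both coordinates lie in $[-1,1]$; hence
$$\zeta(x)-\zeta(y)\;\le\;\sum_{i=1}^{n}c_i(x)\,1\!\mathrm{l}_{x_i\neq y_i},\qquad c_i(x):=2\,|b_i(x)|.$$
Since $\sum_{i=1}^{n}c_i^2(x)=4\,\|b(x)\|^2\le 4$ for every $x$ (as $B\subseteq\mathcal B_n$), we get $\big\|\sum_{i=1}^{n}c_i^2\big\|_\infty\le 4$, which is exactly condition $(\mathcal C_v)$ with $v=4$.

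It then remains only to apply Corollary~\ref{ztail-marton}. The Rademacher variables $\epsilon_1,\dots,\epsilon_n$ are independent and take values in $\{-1,1\}\subseteq[-1,1]$, so $Z=\sup_{b\in B}\langle b,\epsilon\rangle=\zeta(\epsilon_1,\dots,\epsilon_n)$ fits the setting of that corollary. With $v=4$ it gives
$$P\{Z-EZ\le -t\}\vee P\{Z-EZ\ge t\}\;\le\;\exp\!\big(-t^2/(2v)\big)\;=\;\exp\!\big(-t^2/8\big)$$
for every $t\ge 0$, which is the assertion.

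The one point that calls for a word of care — and it is precisely the point left implicit in the treatment of $d_T$ above — is the measurability of the coefficients $c_i$, i.e.\ of $x\mapsto |b_i(x)|$, needed to meet the letter of the definition of $(\mathcal C_v)$. I expect this to be the only (mild) obstacle: it is handled by a standard measurable selection argument for the $\argmax$ of the upper semicontinuous map $(x,b)\mapsto\langle b,x\rangle$ on the compact set $B$, exactly as one does to make sense of the maximizing vector $c(x)$ appearing in the definition of $d_T$. Everything else is bookkeeping: all the probabilistic substance is already contained in Marton's transportation inequality and the variational formula for entropy, and this proposition merely feeds that machinery the value $v=4$.
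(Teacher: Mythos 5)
Your proposal is correct and follows essentially the same route as the paper: it introduces $\zeta(x)=\sup_{b\in B}\langle b,x\rangle$ on $[-1,1]^n$, verifies condition $(\mathcal C_4)$ via a maximizer $b(x)$ with $c_i(x)=2|b_i(x)|$, and concludes by Corollary~\ref{ztail-marton} with $v=4$. Your additional remark on the measurability of the $c_i$ is a point the paper leaves implicit, but it does not change the argument.
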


Furthermore, the variance of the supremum of a Rademacher can easily be controlled via Efron-Stein's inequality. Considering independent Rademacher variables $\epsilon'_1,\epsilon'_2,\ldots,\epsilon'_n$ which are independent from $\epsilon_1,\epsilon_2,\ldots,\epsilon_n$ and setting $Z'_i=\sup_{b\in B}(b_i\epsilon'_i+\sum_{j\neq i} b_j\epsilon_j)$, Efron-Stein's inequality states that 
\newcommand{\Var}{\operatorname{Var}}
$$\Var(Z)\leq \sum_{i=1}^{n}E(Z-Z'_i)_+^2.$$
Now, writing $Z$ as $Z= \langle b(\epsilon),\epsilon \rangle$ and noticing that
$$Z-Z'_i \leq b_i(\epsilon)(\epsilon _{i}-\epsilon'_{i})$$
leads to 
$$E((Z-Z'_i)_+^2\vert \epsilon)\leq b_i^2(\epsilon)(1+\epsilon_i^2)$$
and finally to 
\begin{equation}
\Var(Z)\leq 2E\left(\sum_{i=1}^{n}b_i^2(\epsilon)\right)\leq 2. \label{es}
\end{equation}
The latter inequality is especially interesting to control the expectation of  the square root of a chi-square type statistics from 
below. 
More precisely, if we consider some orthonormal family of vectors $\{\phi_j ,1\leq j\leq D\}$ and if we define the chi-square type statistics $$\chi^2=\sum_{1\leq j\leq D}\langle \epsilon,\phi_j\rangle^2$$
$\chi$ can interpreted as the supremum of a Rademacher process. Indeed, if we simply set $B=\{\sum_{1\leq j\leq D}\theta_j\phi_j \vert \sum_{1\leq j\leq D}\theta_j^2\leq 1\}$, then 
$$\chi=\sup _{b\in B}\langle b, \epsilon\rangle.$$
Applying the above results to control the upper and lower tails of $\chi$ is exactly what we shall need in the statistical part of the paper to highlight phase transition phenomena in the behavior of penalized least squares model selection criteria. More precisely, since we know by (\ref{es}) that $\Var(\chi )\leq 2$, we derive the following sharp inequalities for the expectation of $\chi$
$$D-2\leq (E(\chi))^2\leq D.$$
Combining this with Proposition \ref{rademacher} leads to the following ready-to-use upper and lower tails controls, which hold for all positive $x$
\begin{equation}
	\chi \leq \sqrt{D}+2\sqrt{2x} \label{rightchi}
\end{equation}
except on a set with probability less than $e^{-x}$ while 
\begin{equation}
	\chi\geq \sqrt{(D-2)_{+}}-2\sqrt{2x} \label{leftchi}
\end{equation}
except on a set with probability less than $e^{-x}$.

\section{Model selection for regression with Rademacher errors}
Our aim is to show how some fairly general ideas (as those developed in \cite{BBM}, \cite{BM99g} or \cite{mas-stflour} for instance) work in a very simple context where the technical aspects are deliberately reduced. The statistical framework we have chosen is that of regression with Rademacher errors which can be described as follows. One observes 
\begin{equation}
Y=f+\sigma \epsilon \label{reg}
\end{equation}
where $f$ is some unknown vector in $\mathbb R^n$, $\epsilon$ is a random vector 
in $\mathbb R^n$ with components $\epsilon_1,\epsilon_2,\ldots,\epsilon_n$ which are 
independent Rademacher random variables and $\sigma$ is some positive real number 
(the level of noise, which is assumed to be known at this point). The issue is to 
estimate $f$ and the model selection approach to do so consists in starting from 
a (finite or countable) collection of models $\{S_m, m\in \mathcal M\}$ that we assume here to be 
linear subspaces of the Euclidean space $\mathbb R^n$. Consider for each model 
$S_m$ the least-square estimator which is merely defined as 
\begin{equation}
\hat f_m=\argmin_{g\in S_m}\parallel Y-g\parallel^2 \label{least-square}
\end{equation}
in other words, $\hat f_m$ is the orthogonal projection of $Y$ onto $S_m$.
The purpose is to select an estimator from the collection $\{\hat f_m, m\in 
\mathcal M\}$ in a clever way. We need some quality criterion here. Since we are 
dealing with least squares a natural one is the quadratic expected risk. Since 
everything is explicit, it is easy to compute it in this case. By Pythagoras' 
identity we indeed can decompose the quadratic loss of $\hat f_m$ as 
follows
$$\parallel \hat f_m-f\parallel^2=\parallel f_m-f\parallel^2+\parallel \hat f_m-f_m\parallel^2,$$
where $f_m$ denotes the orthogonal projection of $f$ onto $S_m$. The connection with the probabilistic part of the paper comes from the analysis of the random part of the decomposition. Denoting by $\Pi _m $ the orthogonal projection operator onto $S_m$ the random term can be written as 
\begin{equation}
	\parallel \hat f_m-f_m\parallel^2=\parallel \Pi _m(Y-f)\parallel^2=\sigma ^2 \parallel \Pi _m(\epsilon )\parallel^2. \label{randomterm}
\end{equation}

Taking some orthonormal basis $\{\phi ^{(m)}_j, 1\leq j\leq D_m\}$ of $S_m$ the quantity $\parallel \Pi _m(\epsilon )\parallel^2$ appears to be some chi-square type statistics 
$$ \chi^2_m=\parallel \Pi _m(\epsilon )\parallel^2=\sum_{1\leq j\leq D}\langle \epsilon,\phi^{(m)}_j\rangle^2$$
and therefore the expected quadratic risk of $\hat f_m$ can be computed as 
$$\mathbb E_f\parallel \hat f_m-f\parallel^2=\parallel f_m-f\parallel^2+\sigma ^2D_m$$

This formula for the quadratic risk perfectly reflects the model choice paradigm 
since if one wants to choose a model in such a way that the risk of the resulting 
least square estimator remains under control, we have to warrant that the
bias term $\parallel f_m-f\parallel^2$ and the variance term $\sigma ^2D_m$ remain 
simultaneously under control. This corresponds intuitively to what one should 
expect from a "good" model: it should fit to the data but should not be too 
complex in order to avoid overfitting. We therefore keep the quadratic risk as a 
quality criterion, which means that mathematically speaking, an
\textquotedblright ideal\textquotedblright\ model should minimize
$\mathbb{E}_f  \parallel \hat{f}_{m}-f\parallel ^{2}  $ 
with respect to $m\in\mathcal{M}$. It is called an 
"oracle". Of course, since we do not know the bias, 
the quadratic risk cannot be used as a statistical 
model choice criterion but just as a benchmark. The 
issue is now to consider data-driven criteria to 
select an estimator which tends to mimic an oracle, 
i.e. one would like the risk of the selected 
estimator $\hat{f}_{\hat{m}}$ to be as close 
as possible to the oracle benchmark 
$$\inf_{m\in \mathcal M}\mathbb{E}_f  \parallel \hat{f}_{m}-f \parallel ^{2} \text{.}$$

\subsection{Model selection via penalization and Mallows' heuristics}

Let us describe the method. The penalized least squares procedure consists in considering some proper penalty function
$\operatorname*{pen}$: $\mathcal{M}\rightarrow\mathbb{R}_{+}$ and take
$\hat{m}$ minimizing
$
  \parallel Y-\hat{f}_{m}\parallel^2  +\operatorname*{pen}\left(  m\right)
$
over $\mathcal{M}$. 
Since by Pythagora's identity, 
$$\parallel Y-\hat{f}_{m}\parallel^2 = \parallel Y\parallel^2-\parallel\hat f_m\parallel^2 \text{,}$$
we can equivalently consider $\hat{m}$ minimizing
$$
  -\parallel\hat{f}_{m}\parallel^2  +\operatorname*{pen}\left(  m\right)
$$
over $\mathcal{M}$.  Then, we can define the selected model $S_{\hat{m}}$
and the corresponding selected least squares estimator $\hat{f}_{\hat{m}}$.

Penalized criteria have been proposed in the early 
seventies by Akaike or Schwarz (see \cite{Akaike} 
and \cite{schwartz}) for penalized maximum
log-likelihood in the density estimation framework 
and Mallows for penalized least squares regression 
(see \cite{dan-wood} and \cite{Mallows}).
The crucial issue is: how to penalize? The classical answer given by Mallows' $C_{p}$ 
is based on some heuristics and on the unbiased risk estimation principle. It can be described as follows. An "ideal" model should 
minimize the quadratic risk%
\index{Quadratic risk}
\[
\left\Vert f_{m}-f\right\Vert ^{2}+\sigma^{2}D_{m}=\left\Vert
f\right\Vert ^{2}-\left\Vert f_{m}\right\Vert ^{2}+\sigma^{2}D_{m}\text{,}%
\]
or equivalently
\[
-\left\Vert f_{m}\right\Vert ^{2}+\sigma^{2}D_{m}\text{.}%
\]
At this step, it is tempting to use $\parallel \hat{f}_{m}\parallel ^{2}$ as an estimator of $\left\Vert f_{m}\right\Vert ^{2}$. But this estimator turns out to be biased. Indeed, starting from the decomposition
$$\parallel \hat{f}_{m}\parallel ^{2}-\left\Vert f_m\right\Vert ^{2}=\parallel \hat f_m-f_m\parallel^2+2\langle f_m,\hat{f}_{m}-f_m \rangle=\sigma ^2 \parallel\Pi_m(\epsilon)\parallel^2+2\sigma \langle f_m,\Pi_{m}(\epsilon ) \rangle$$
and noticing that by orthogonality $ \langle f_m,\Pi_{m}(\epsilon ) \rangle=\langle f_m,\epsilon  \rangle$,
leads to the following meaningful formula
\begin{equation}
	\parallel \hat{f}_{m}\parallel ^{2}=\parallel f_{m}\parallel ^{2}+\sigma^2\chi^2_m+2\sigma \langle f_m,\epsilon  \rangle \text{.} \label{riskestimate}
\end{equation}
From this formula we see that the expectation of $\parallel \hat{f}_{m}\parallel ^2$ is equal to $\parallel f_{m}\parallel ^{2}+\sigma^2D_m$. We can know remove this bias. Substituting to $\parallel f_{m}\parallel ^{2}$ its natural unbiased
estimator $\parallel \hat{f}_{m}\parallel ^{2}-\sigma^{2}D_{m}$
leads to
\index{Mallows' Cp@Mallows' $C_{p}$}%
Mallows' $C_{p}$
\[
-\parallel \hat{f}_{m} \parallel ^{2}+ 2 \sigma^{2}D_{m}\text{.}%
\]
The weakness of this analysis is that it relies on the computation of the
expectation of $\parallel \hat{f}_{m}\parallel ^{2}$ for every given
model but nothing warrants that $\parallel \hat{f}_{m}\parallel ^{2}$
will stay of the same order of magnitude as its expectation for all models
simultaneously. This leads to consider some more general model selection%
\index{Model selection!in the Gaussian framework@\textit{in the Gaussian framework}}
criteria involving penalties which may differ from Mallows%
\index{Mallows}%
' penalty.

\subsection{An oracle type inequality}

The above heuristics can be justified (or corrected) if one can specify how
close is $\parallel \hat{f}_{m}\parallel ^{2}$ from its expectation
$\left\Vert f_{m}\right\Vert ^{2}+\sigma^{2}D_{m}$, uniformly with
respect to $m\in\mathcal{M}$. The upper tail probability bound provided by Proposition \ref{rademacher} will
precisely be the adequate tool to do that. The price to pay is to consider more flexible penalty functions that can take into account 
the complexity of the list of models. As a consequence, the performance of the selected least-squares estimator  is judged by an oracle inequality that differs slightly from what might have been expected. The following result is the exact analogue of the model selection theorem established in \cite{BM99g} in the Gaussian regression framework.
\begin{theorem}
\label{modelselect}Let $\left\{  x_{m}\right\}  _{m\in\mathcal{M}}$ be some family
of positive numbers such that
\begin{equation}
\sum_{m\in\mathcal{M}}\exp\left(  -x_{m}\right)  =\Sigma<\infty\text{.}%
\label{EG2'}%
\end{equation}
Let $K>1$ and assume that
\begin{equation}
\operatorname{pen}\left(  m\right)  \geq K\sigma^{2}\left(  \sqrt{D_{m}%
}+2\sqrt{2x_{m}}\right)  ^{2}\text{.}\label{e3pen1}%
\end{equation}
Let $\hat{m}$ minimizing the penalized least-squares
criterion
\begin{equation}
	\operatorname{crit}(m)=-\parallel \hat{f}_{m}\parallel ^{2}+\operatorname{pen}\left(
m\right)  \label{pencrit}
\end{equation}
over $m\in\mathcal{M}$. The corresponding penalized least-squares estimator
$\hat{f}_{\hat{m}}$ satifies to the following risk bound
\begin{equation}
\mathbb{E}_{f}  \parallel \hat{f}_{\hat{m}}-f\parallel
^{2}  \leq C\left(  K\right)  \left\{  \inf_{m\in\mathcal{M}}\left(
\left\Vert f_{m}-f\right\Vert ^{2}+\operatorname{pen}\left(  m\right)
\right)  +\left(  1+\Sigma\right)  \sigma^{2}\right\}
\text{,}\label{e3riskoracle}%
\end{equation}
where $C\left(  K\right)  $ depends only on $K$.
\end{theorem}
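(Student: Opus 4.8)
I would follow the classical penalized-comparison argument of Birg\'e and Massart \cite{BM99g}; the only genuinely probabilistic input is the chi-square upper tail (\ref{rightchi}), itself a consequence of Proposition \ref{rademacher}. Since $\hat m$ minimizes $\|Y-\hat f_m\|^2+\operatorname{pen}(m)$, for every $m\in\mathcal M$
\[
\|Y-\hat f_{\hat m}\|^2+\operatorname{pen}(\hat m)\leq\|Y-\hat f_m\|^2+\operatorname{pen}(m),
\]
and expanding both sides with $Y=f+\sigma\epsilon$ and cancelling $\sigma^2\|\epsilon\|^2$ turns this into
\[
\|\hat f_{\hat m}-f\|^2\leq\|\hat f_m-f\|^2-2\sigma\langle\epsilon,\hat f_m-f\rangle+\operatorname{pen}(m)-\operatorname{pen}(\hat m)+2\sigma\langle\epsilon,\hat f_{\hat m}-f\rangle.
\]
Using $\hat f_m=f_m+\sigma\Pi_m\epsilon$, the ``frozen-$m$'' contribution $\|\hat f_m-f\|^2-2\sigma\langle\epsilon,\hat f_m-f\rangle+\operatorname{pen}(m)$ simplifies to $\|f_m-f\|^2-\sigma^2\chi_m^2-2\sigma\langle\epsilon,f_m-f\rangle+\operatorname{pen}(m)$, whose expectation is at most $\|f_m-f\|^2+\operatorname{pen}(m)$ because $\mathbb E\chi_m^2=D_m\geq0$ and $\mathbb E\langle\epsilon,f_m-f\rangle=0$. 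So the whole problem reduces to controlling the single random term $2\sigma\langle\epsilon,\hat f_{\hat m}-f\rangle$.

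The key observation is that $\hat f_{\hat m}-f$ lies in the subspace $\bar S_{\hat m}:=S_{\hat m}+\operatorname{span}(f)$, which is deterministic ($f$ being fixed) and of dimension $\bar D_{\hat m}\leq D_{\hat m}+1$. Setting $\bar\chi_m:=\|\Pi_{\bar S_m}\epsilon\|$, for any $\eta\in(0,1)$ this gives
\[
2\sigma\langle\epsilon,\hat f_{\hat m}-f\rangle\leq 2\sigma\bar\chi_{\hat m}\|\hat f_{\hat m}-f\|\leq\eta\|\hat f_{\hat m}-f\|^2+\frac{\sigma^2}{\eta}\bar\chi_{\hat m}^2,
\]
and $\eta\|\hat f_{\hat m}-f\|^2$ is moved to the left-hand side. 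It then remains to absorb $\sigma^2\bar\chi_{\hat m}^2/\eta$ into $\operatorname{pen}(\hat m)$.

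Since each $\bar\chi_m$ is a chi-type statistic with $\bar D_m$ degrees of freedom, (\ref{rightchi}) applied at level $x_m+z$, together with $\sqrt{x_m}+\sqrt z\geq\sqrt{x_m+z}$, gives $\mathbb P\{\bar\chi_m>\sqrt{\bar D_m}+2\sqrt{2x_m}+2\sqrt{2z}\}\leq e^{-x_m}e^{-z}$. Summing over $m$, the random variable $z^\star:=\sup_m\frac18\big((\bar\chi_m-\sqrt{\bar D_m}-2\sqrt{2x_m})_+\big)^2$ satisfies $\mathbb P\{z^\star>z\}\leq\Sigma e^{-z}$, hence $\mathbb E z^\star\leq 1+\Sigma$, while by construction $\bar\chi_m\leq\sqrt{\bar D_m}+2\sqrt{2x_m}+2\sqrt{2z^\star}$ for every $m$ simultaneously, with no exceptional set. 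Because $\sqrt{\bar D_m}\leq\sqrt{D_m}+1$, writing $a_m:=\sqrt{D_m}+2\sqrt{2x_m}$ and using $(a+b)^2\leq(1+\delta)a^2+(1+\delta^{-1})b^2$ yields $\bar\chi_{\hat m}^2\leq(1+\delta)a_{\hat m}^2+C_\delta(1+z^\star)$. I then pick $\delta>0$ small and $\eta<1$ close enough to $1$ that $(1+\delta)/\eta\leq K$ --- possible precisely because $K>1$ --- so that $\eta^{-1}(1+\delta)\sigma^2a_{\hat m}^2\leq K\sigma^2a_{\hat m}^2\leq\operatorname{pen}(\hat m)$ by (\ref{e3pen1}). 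Thus $\sigma^2\bar\chi_{\hat m}^2/\eta-\operatorname{pen}(\hat m)\leq C_K\sigma^2(1+z^\star)$ deterministically, and combining with the first display,
\[
(1-\eta)\|\hat f_{\hat m}-f\|^2\leq\|f_m-f\|^2-\sigma^2\chi_m^2-2\sigma\langle\epsilon,f_m-f\rangle+\operatorname{pen}(m)+C_K\sigma^2(1+z^\star).
\]
Taking expectations, then the infimum over $m\in\mathcal M$, and dividing by $1-\eta$ produces (\ref{e3riskoracle}) with $C(K)$ depending only on $K$.

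I expect the main obstacle to be exactly the constant bookkeeping of the previous paragraph, which is what makes the threshold $K>1$ rather than some larger absolute constant. It forces one to (i) enlarge $S_m$ by $\operatorname{span}(f)$ so that the unavoidable ``$+1$'' merely sits next to $D_m$ instead of multiplying it; (ii) use the random slack $z^\star$ rather than a fixed high-probability event, whose complement would leave $\|\hat f_{\hat m}-f\|^2$ uncontrolled and reintroduce $\|f\|^2$ or $n$ into the bound; and (iii) use that the coefficient $2\sqrt{2x_m}$ in (\ref{e3pen1}) matches exactly the one produced by (\ref{rightchi}). Everything else --- the algebraic identities for $\|\hat f_m-f\|^2$ and $\langle\epsilon,\hat f_m-f\rangle$, and the routine integration giving $\mathbb E z^\star\leq1+\Sigma$ --- is bookkeeping.
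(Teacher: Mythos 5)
Your proof is correct, and while it shares the global architecture of the paper's argument (penalized comparison, reduction to a chi-type statistic on the selected model, union bound over $\mathcal M$ weighted by $e^{-x_m}$, then integration), the way you control the key cross term is genuinely different. The paper keeps $f_m$ as the reference point, bounds $2\sigma\langle \hat f_{\hat m}-f_m,\epsilon\rangle$ by the \emph{normalized} supremum $\chi_{m,\hat m}=\sup_{g\in S_{\hat m}}\langle g-f_m,\epsilon\rangle/(\|f_m-f\|+\|g-f\|)$, and controls that supremum by splitting it into a piece dominated by $\chi_{\hat m}$ and a one-dimensional piece of expectation at most $1$ (this is where its ``$+1$'' next to $\sqrt{D_{m'}}$ comes from); the normalization is what produces the $\eta^{-1}\|f_m-f\|^2$ inflation of the bias in its Claim 3.8. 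You instead take $f$ itself as the reference point, observe that the frozen-$m$ terms have nonpositive extra expectation, and handle $2\sigma\langle\epsilon,\hat f_{\hat m}-f\rangle$ by projecting $\epsilon$ onto the deterministic enlarged space $S_{\hat m}+\operatorname{span}(f)$ --- a device in the spirit of Birg\'e--Massart's Gaussian proof --- so your ``$+1$'' enters through $\sqrt{D_m+1}\le\sqrt{D_m}+1$ rather than through a separate one-dimensional supremum. The two routes are equivalent in strength: both keep the correction additive at the scale of $\sqrt{D_m}$, both exploit $K>1$ through the same $(1+\delta)/\eta\le K$ bookkeeping, and your integrated slack variable $z^\star$ is interchangeable with the paper's ``fix $\xi$, obtain a tail bound valid off an event of probability $\Sigma e^{-\xi}$, then integrate in $\xi$.'' The one trade-off worth noting is that your version leaves the bias term uninflated (expectation exactly $\le\|f_m-f\|^2+\operatorname{pen}(m)$) and puts all the $K$-dependence into the $1/(1-\eta)$ prefactor, whereas the paper distributes it between $\eta^{-1}\|f_m-f\|^2$ and the penalty condition; both give a constant $C(K)$ blowing up as $K\downarrow 1$, as they must.
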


The proof of this result is based on two claims. The first one provides a risk bound which derives from the very definition of the selection 
procedure via some elementary calculus while the second one is a consequence of the probabilistic material brought by the first part of 
the paper. Let us first introduce some notation. For all $m,m'\in \mathcal M$ we define 
\begin{equation}
	\chi _{m,m'}=\sup_{g\in S_{m'}} \frac{\langle g-f_m,\epsilon\rangle}{\left\Vert f_{m}-f\right\Vert+\left\Vert g-f\right\Vert} \label{defchimix}.
\end{equation}
The role of this supremum of a Rademacher process in the proof of Theorem \ref{modelselect} is elucidated by the following statement.
\begin{claim} \label{risketa}
	If $\hat m$ minimizes the penalized least-squares criterion (\ref{pencrit}), then for every $m\in \mathcal M$ and all $\eta \in ]0,1[$
	$$\eta \parallel \hat{f}_{\hat{m}}-f\parallel^{2}\leq \eta^{-1}\parallel f_{m}-f\parallel^2+\operatorname{pen}\left(
m\right)+\left (\frac{1+\eta}{1-\eta}\right )\sigma^2\chi _{m,\hat m}^2-\operatorname{pen}\left(
\hat m\right).$$
\end{claim}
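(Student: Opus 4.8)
The plan is to derive everything from the defining property of $\hat m$ by elementary algebra, in three movements: first, convert the minimality of $\operatorname{crit}$ into a comparison between the true losses $\parallel\hat f_{\hat m}-f\parallel^2$ and $\parallel f_m-f\parallel^2$ up to a fluctuation term linear in $\epsilon$; second, recognize that fluctuation term as exactly of the form controlled by the Rademacher process $\chi_{m,\hat m}$ of (\ref{defchimix}); third, absorb the resulting $\parallel\hat f_{\hat m}-f\parallel$ and $\parallel f_m-f\parallel$ factors into the two sides of the inequality by two applications of Young's inequality with weights tuned to $\eta$. The hard part will not be any single estimate but the bookkeeping in the last step: one must pick the two Young weights so that the coefficients come out as $\eta^{-1}$ and $\frac{1+\eta}{1-\eta}$ exactly, not merely up to constants.

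First I would rewrite $\operatorname{crit}(\hat m)\leq\operatorname{crit}(m)$ using $\parallel Y-\hat f_m\parallel^2=\parallel Y\parallel^2-\parallel\hat f_m\parallel^2$ as $\parallel Y-\hat f_{\hat m}\parallel^2+\operatorname{pen}(\hat m)\leq\parallel Y-\hat f_m\parallel^2+\operatorname{pen}(m)$, substitute $Y=f+\sigma\epsilon$, expand both squares (the common term $\sigma^2\parallel\epsilon\parallel^2$ cancels) and rearrange, obtaining
$$\parallel\hat f_{\hat m}-f\parallel^2\leq\parallel\hat f_m-f\parallel^2+2\sigma\langle\hat f_{\hat m}-\hat f_m,\epsilon\rangle+\operatorname{pen}(m)-\operatorname{pen}(\hat m).$$
Then I would split $\hat f_{\hat m}-\hat f_m=(\hat f_{\hat m}-f_m)-(\hat f_m-f_m)$ and use $\hat f_m-f_m=\Pi_m(Y-f)=\sigma\Pi_m(\epsilon)$, so that $\langle\hat f_m-f_m,\epsilon\rangle=\sigma\parallel\Pi_m(\epsilon)\parallel^2=\sigma\chi_m^2\geq0$; together with the Pythagorean identity $\parallel\hat f_m-f\parallel^2=\parallel f_m-f\parallel^2+\sigma^2\chi_m^2$ (valid since $f-f_m\perp S_m$), the two $\chi_m^2$ contributions merge into a single nonpositive term $-\sigma^2\chi_m^2$, which is simply dropped, leaving
$$\parallel\hat f_{\hat m}-f\parallel^2\leq\parallel f_m-f\parallel^2+2\sigma\langle\hat f_{\hat m}-f_m,\epsilon\rangle+\operatorname{pen}(m)-\operatorname{pen}(\hat m).$$

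Now the Rademacher process enters: since $\hat f_{\hat m}\in S_{\hat m}$, applying the definition (\ref{defchimix}) of $\chi_{m,\hat m}$ with the admissible choice $g=\hat f_{\hat m}$ gives $\langle\hat f_{\hat m}-f_m,\epsilon\rangle\leq\chi_{m,\hat m}\bigl(\parallel f_m-f\parallel+\parallel\hat f_{\hat m}-f\parallel\bigr)$ (the degenerate case of a vanishing denominator forcing both sides to be $0$ is trivial). Writing $a=\parallel f_m-f\parallel$, $b=\parallel\hat f_{\hat m}-f\parallel$, $c=\sigma\chi_{m,\hat m}$, the previous display reads $b^2\leq a^2+2c(a+b)+\operatorname{pen}(m)-\operatorname{pen}(\hat m)$. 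Finally I would use $2xy\leq\alpha x^2+\alpha^{-1}y^2$ twice: applied to $2cb$ with $\alpha=1-\eta$ it transfers $(1-\eta)b^2$ to the left, so $\eta b^2$ remains, and contributes a remainder $c^2/(1-\eta)$; applied to $2ca$ with $\alpha=\eta^{-1}-1$ it turns $a^2+2ca$ into $\eta^{-1}a^2+\eta c^2/(1-\eta)$. Summing the two $c^2$ remainders gives $\frac{1+\eta}{1-\eta}c^2$, which produces exactly the stated inequality $\eta\parallel\hat f_{\hat m}-f\parallel^2\leq\eta^{-1}\parallel f_m-f\parallel^2+\operatorname{pen}(m)+\frac{1+\eta}{1-\eta}\sigma^2\chi_{m,\hat m}^2-\operatorname{pen}(\hat m)$. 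As noted, the only point demanding care is this final choice of weights; nothing deeper is involved, the claim being a purely algebraic consequence of the minimality of $\hat m$ and the definition of $\chi_{m,\hat m}$.
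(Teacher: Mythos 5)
Your proof is correct and follows essentially the same route as the paper's: derive the basic inequality $\parallel \hat f_{\hat m}-f\parallel^2\leq \parallel f_m-f\parallel^2+2\sigma\langle \hat f_{\hat m}-f_m,\epsilon\rangle+\operatorname{pen}(m)-\operatorname{pen}(\hat m)$ from the minimality of $\hat m$ (the paper reaches it via the identity for $\parallel\hat f_m\parallel^2$ and adding $2\sigma^2\chi^2_{\hat m}$, you by expanding $\parallel Y-\hat f_m\parallel^2$ directly, which is the same algebra), then bound the cross term by $\chi_{m,\hat m}$ and absorb with Young's inequality. Your weights $(1-\eta)$ and $(\eta^{-1}-1)$ applied separately to $2cb$ and $2ca$ reproduce exactly the paper's single split with $\delta=(1-\eta)/(1+\eta)$ followed by the decomposition of $(a+b)^2$, so the constants match and nothing is missing.
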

\begin{proof} 
Pythagoras' identity combined with (\ref{riskestimate}) leads to
\begin{equation}
	\parallel f\parallel ^{2}+\operatorname{crit} (m)=\left\Vert f-f_{m}\right\Vert ^{2}-
	\sigma^2\chi^2_m-2\sigma \langle f_m,\epsilon  \rangle + 
	\operatorname{pen}\left(m\right) \text{.} \label{critexpression}
\end{equation}
Let $m$ be some given element of $\mathcal M$. By (\ref{critexpression}), $\operatorname{crit} (\hat m) \leq \operatorname{crit} (m)$ means that
$$\left\Vert f-f_{\hat m}\right\Vert ^{2}-\sigma^2\chi^2_{\hat m}\leq \left\Vert f-f_{m}\right\Vert ^{2}- \sigma^2\chi^2_m + \operatorname{pen}\left(m\right)+2\sigma \langle f_{\hat m}-f_m,\epsilon  \rangle-\operatorname{pen}\left(\hat m\right).$$
We can drop the non positive term $- \sigma^2\chi^2_m$ and add $2\sigma^2\chi^2_{\hat m}$ on both sides of the preceding  
preceding inequality, which leads to
$$\left\Vert f-f_{\hat m}\right\Vert ^{2}+\sigma^2\chi^2_{\hat m}\leq \left\Vert f-f_{m}\right\Vert ^{2} + \operatorname{pen}\left(m\right)+2\sigma \langle f_{\hat m}-f_m,\epsilon  \rangle+2\sigma^2\chi^2_{\hat m}-\operatorname{pen}\left(\hat m\right).$$
Noticing that $\sigma \chi^2_{\hat m}=\langle \hat f_{\hat m}-f_{\hat m},\epsilon  \rangle$ and therefore $ \langle f_{\hat m}-f_m,\epsilon  \rangle+\sigma \chi^2_{\hat m}= \langle \hat f_{\hat m}-f_m,\epsilon  \rangle$, we finally derive the inequality that we shall rely upon to prove Claim \ref	{risketa}
$$\left\Vert f-f_{\hat m}\right\Vert ^{2}+\sigma^2\chi^2_{\hat m}\leq \left\Vert f-f_{m}\right\Vert ^{2} + \operatorname{pen}\left(m\right)+2\sigma \langle \hat f_{\hat m}-f_m,\epsilon  \rangle-\operatorname{pen}\left(\hat m\right),$$
which yields
\begin{equation}
\parallel \hat{f}_{\hat{m}}-f\parallel^{2}\leq \parallel f-f_{m}\parallel ^{2} + \operatorname{pen}\left(m\right)+2\sigma \langle \hat f_{\hat m}-f_m,\epsilon  \rangle-\operatorname{pen}\left(\hat m\right). \label{riskbasic}
\end{equation}
To finish the proof, notice first that
$$2\sigma \langle \hat f_{\hat m}-f_m,\epsilon  \rangle \leq 2\sigma \left (\left\Vert f_{m}-f\right\Vert+\left\Vert \hat f_{\hat m}-f\right\Vert\right )\chi _{m,\hat m}.$$
Now we define $\delta = (1-\eta)/(1+\eta)$ and use repeatedly the inequality $2ab\leq a^2+b^2$ to derive that on the one hand
$$2\sigma \langle \hat f_{\hat m}-f_m,\epsilon  \rangle \leq \delta^{-1}\sigma ^2\chi _{m,\hat m}^2+ \delta \left(\left\Vert f_{m}-f\right\Vert+\left\Vert \hat f_{\hat m}-f\right\Vert\right )^2$$
and on the other hand 
$$\left(\left\Vert f_{m}-f\right\Vert+\left\Vert \hat f_{\hat m}-f\right\Vert\right )^2\leq (1+\eta ^{-1})\left\Vert f_{m}-f\right\Vert^2+(1+\eta)\left\Vert \hat f_{\hat m}-f\right\Vert^2 \text{.}$$
Combining these two inequalities and plugging the resulting upper bound on $2\sigma \langle \hat f_{\hat m}-f_m,\epsilon  \rangle$ into (\ref{riskbasic}) finally leads to the claim.
\end{proof}

Let us now state the second claim which will provide some control on the quantity $\chi _{m,m'}$ defined by (\ref{defchimix}).
\begin{claim} \label{controlchimix}
	For every $m,m' \in \mathcal M$, the following probability bound holds true. For all non negative real number $x$
$$\chi _{m,m'} \leq 1+\sqrt {D_{m'}}+2\sqrt{2x} $$
except on a set with probability less than $e^{-x}$.

\end{claim}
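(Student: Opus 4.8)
The plan is to recognise $\chi_{m,m'}$ as the supremum of a Rademacher process indexed by a subset of the unit ball, so that Proposition~\ref{rademacher} applies directly, and then to control the expectation $E(\chi_{m,m'})$ by the crude bound $1+\sqrt{D_{m'}}$.

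First I would note that, by the triangle inequality, $\left\Vert g-f_m\right\Vert \leq \left\Vert f_m-f\right\Vert+\left\Vert g-f\right\Vert$ for every $g\in S_{m'}$, so that each vector $b_g:=(g-f_m)/(\left\Vert f_m-f\right\Vert+\left\Vert g-f\right\Vert)$ belongs to the closed unit Euclidean ball of $\mathbb{R}^n$ (the degenerate case of a vanishing denominator forces $f=f_m$ and $g=f$, hence a vanishing numerator, and may be discarded). Therefore $\chi_{m,m'}=\sup_{b\in\mathcal{B}}\langle b,\epsilon\rangle$ with $\mathcal{B}=\{b_g:g\in S_{m'}\}$ contained in the unit ball, and Proposition~\ref{rademacher} gives, for every $t\geq 0$, $P\{\chi_{m,m'}-E(\chi_{m,m'})\geq t\}\leq \exp(-t^2/8)$. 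Taking $t=2\sqrt{2x}$ produces the probability $e^{-x}$, so it only remains to check that $E(\chi_{m,m'})\leq 1+\sqrt{D_{m'}}$.

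For the expectation bound I would split the numerator along $S_{m'}$. Writing $g-f_m=(g-f_{m'})+(f_{m'}-f_m)$ with $g-f_{m'}\in S_{m'}$, the Cauchy--Schwarz inequality yields $\langle g-f_{m'},\epsilon\rangle=\langle g-f_{m'},\Pi_{m'}\epsilon\rangle\leq \left\Vert g-f_{m'}\right\Vert\,\chi_{m'}\leq \left\Vert g-f\right\Vert\,\chi_{m'}$, where $\chi_{m'}=\left\Vert \Pi_{m'}\epsilon\right\Vert$ and the last inequality is Pythagoras' identity $\left\Vert g-f\right\Vert^2=\left\Vert g-f_{m'}\right\Vert^2+\left\Vert f_{m'}-f\right\Vert^2$. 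Moreover $f_{m'}-f_m$ is deterministic and satisfies $\left\Vert f_{m'}-f_m\right\Vert\leq \left\Vert f_m-f\right\Vert+\left\Vert f_{m'}-f\right\Vert\leq \left\Vert f_m-f\right\Vert+\left\Vert g-f\right\Vert$, so that, after dividing by the denominator, the term $\langle f_{m'}-f_m,\epsilon\rangle/(\left\Vert f_m-f\right\Vert+\left\Vert g-f\right\Vert)$ is at most $(\langle b'',\epsilon\rangle)_+$, where $b''$ is the unit vector in the direction of $f_{m'}-f_m$ (the term being simply $0$ when $f_{m'}=f_m$). Taking the supremum over $g$ gives the deterministic bound
$$\chi_{m,m'}\leq \chi_{m'}+(\langle b'',\epsilon\rangle)_+ .$$
Taking expectations and using Jensen's inequality twice — $E(\chi_{m'})\leq (E\chi_{m'}^2)^{1/2}=\sqrt{D_{m'}}$, since $\chi_{m'}^2=\sum_{j}\langle\epsilon,\phi^{(m')}_j\rangle^2$ has mean $D_{m'}$, and $E(\langle b'',\epsilon\rangle)_+\leq (E\langle b'',\epsilon\rangle^2)^{1/2}=\left\Vert b''\right\Vert=1$ — gives $E(\chi_{m,m'})\leq 1+\sqrt{D_{m'}}$, which completes the argument.

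The only genuinely delicate point is the first reduction: one has to notice that the normalisation in the definition of $\chi_{m,m'}$ is exactly what the triangle inequality needs in order to keep the index set $\mathcal{B}$ inside the unit ball, thereby licensing Proposition~\ref{rademacher} with its clean constant. Everything afterwards is routine bookkeeping with Pythagoras' identity, the triangle inequality, and two applications of Jensen's inequality.
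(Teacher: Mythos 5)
Your proposal is correct and follows essentially the same route as the paper: the same observation that the normalisation places the index set inside the unit ball so that Proposition~\ref{rademacher} yields the deviation term $2\sqrt{2x}$, and the same decomposition of $g-f_m$ into $(g-f_{m'})+(f_{m'}-f_m)$, with the first piece bounded by $\chi_{m'}$ via the contraction property of the projection and the second by a single normalized linear form, giving $E(\chi_{m,m'})\leq 1+\sqrt{D_{m'}}$ by Cauchy--Schwarz. The only difference is cosmetic (you phrase the contraction step via Pythagoras and keep the positive parts explicit throughout), and you correctly identify that the concentration step rests on Proposition~\ref{rademacher} where the paper's text mistakenly cites Proposition~\ref{fundconc}.
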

\begin{proof}
Since $\parallel g-f_m \parallel \leq \parallel f-f_m \parallel + \parallel g-f \parallel$ we can apply Proposition \ref{fundconc} and asserts that $$\chi _{m,m'} \leq  E(\chi _{m,m'})+2\sqrt{2x} $$
except on a set with probability less than $e^{-x}$. It remains to bound $E\left(\chi_{m,m'}\right )$. To do that we split the supremum defining $\chi _{m,m'}$ in two terms. Namely we set 
$$\chi _{m,m'}^{(1)}=\sup_{g\in S_{m'}} \frac{\langle g-f_{m'},\epsilon\rangle_{+}}{\left\Vert f_{m}-f\right\Vert+\left\Vert g-f\right\Vert} $$
and

$$\chi _{m,m'}^{(2)}=\sup_{g\in S_{m'}} \frac{\langle f_{m'}-f_m,\epsilon\rangle_{+}}{\left\Vert f_{m}-f\right\Vert+\left\Vert g-f\right\Vert} $$
noticing that $\chi _{m,m'}\leq \chi _{m,m'}^{(1)}+\chi _{m,m'}^{(2)}$. To control the first term, we note that since the orthogonal projection is a contraction, $\parallel g-f \parallel\geq \parallel g-f_{m'}\parallel $ for all $g\in S_{m'}$ and therefore by linearity
$$\chi _{m,m'}^{(1)}\leq \sup_{g\in S_{m'}} \frac{\langle g-f_{m'},\epsilon\rangle_{+}}{\left\Vert g-f_{m'}\right\Vert}=\sup_{g\in S_{m'}} \frac{\langle g,\epsilon\rangle}{\left\Vert g\right\Vert}=\chi_{m'} \text{.}  $$
Of course, this bound implies that $E\left(\chi_{m,m'}^{(1)}\right )\leq \sqrt{D_{m'}}$. To control the second term, we note by definition of $f_{m'}$ and the triangle inequality, that for all $g\in S_{m'}$
$$\left\Vert f_{m}-f\right\Vert + \parallel g-f \parallel \geq 
\left\Vert f_{m}-f\right\Vert+\parallel f_{m'}-f\parallel\geq 
\parallel f_{m'}-f_{m}\parallel$$
and therefore
$$\chi _{m,m'}^{(2)}\leq \frac{\langle f_{m'}-f_m,\epsilon\rangle_{+}}{\left\Vert f_{m'}-f_m\right\Vert} \text{.}$$
Invoking Cauchy-Schwarz, and using the fact that $a_{m,m'}=(f_{m'}-f_m)/\left\Vert f_{m'}-f_m\right\Vert$ has norm $1$, leads to
$$E\left (\chi _{m,m'}^{(2)}\right)\leq \sqrt{E\langle a_{m,m'}, \epsilon\rangle^2}=1 \text{.}$$
Collecting the upper bounds on the two terms $E\left (\chi _{m,m'}^{(1)}\right )$ and $E\left (\chi _{m,m'}^{(2)}\right )$ we get $E\left (\chi _{m,m'}\right)\leq 1+\sqrt {D_{m'}}$ and the proof is complete.
\end{proof}

Once these two claims are available, the proof of Theorem \ref{modelselect} is quite straightforward. 

\begin{proof}[Proof of Theorem \ref{modelselect}]
	
To prove the required bound on the expected risk, we first prove an exponential probability bound and then integrate it. Towards this aim we introduce some positive real number $\xi$ (this is the variable that we shall use at the end of the proof to integrate the tail bound that we shall obtain) and we fix some model $m\in \mathcal M$. Using a union bound, Claim \ref{controlchimix} ensures that for all $m'\in \mathcal M$ simultaneously

$$	\chi _{m,m'} \leq 1+\sqrt {D_{m'}}+2\sqrt{2(x_{m'}+\xi )} $$
except on a set with probability less than $\Sigma \exp(-\xi)$. Using $\sqrt{a+b}\leq \sqrt{a}+\sqrt{b}$ and using again $2ab\leq a^2+b^2$, if we define  
$$p_{m'}=\sigma^2\left (\sqrt{D_{m'}}+2\sqrt{2x_{m'}}\right)^2$$ the latter inequality implies that except on a set with probability less than $\Sigma \exp(-\xi)$
\begin{equation}
	\sigma^2\chi _{m,\hat m}^2\leq (1+\eta)p_{\hat m}+(1+\eta^{-1})\sigma^2\left(1+2\sqrt{2\xi}\right)^2 \text{.} \label{chideuxsimulane}
\end{equation}
Let us notice that the quantity $p_{m'}$ which appears here is precisely the one which is involved in the statement of Theorem \ref{modelselect} to bound the penalty from below. More precisely, this constraint can merely be written as $\operatorname{pen}(m')\geq Kp_{m'}$ for all $m'\in \mathcal M$. Let us by now choose $\eta$ in such a way that $K=(1+\eta)^2/(1+\eta)$, then the assumption on the penalty ensures that 
$$\left (\frac{1+\eta}{1-\eta}\right )(1+\eta)p_{\hat m}-\operatorname{pen}\left(\hat m\right)\leq 0.$$
Taking this constraint into account and combining (\ref{chideuxsimulane}) with Claim \ref{risketa} leads to
$$\eta \parallel \hat{f}_{\hat{m}}-f\parallel^{2}\leq \eta^{-1}\parallel f_{m}-f\parallel^2+\operatorname{pen}\left(
m\right)+\frac{(1+\eta)^2}{\eta(1-\eta)}\sigma^2\left(1+2\sqrt{2\xi}\right)^2  $$
except on a set with probability less than $\Sigma \exp(-\xi)$. Using a last time $2ab\leq a^2+b^2$ we upper bound $\left(1+2\sqrt{2\xi}\right)^2$ by $2+16\xi$ and it remains to integrate the resulting tail bound with respect to $\xi$ in order to get the desired upper bound on the expected risk. 
\end{proof}

It is interesting to exhibit some simple condition under which the above result can be applied to a choice of the penalty of the form $\operatorname{pen}\left(
m\right) = K'\sigma ^2D_m$, since obviously in this case the risk bound provided by the Theorem has the expected shape, that is, up to some constant the performance of the selected least-squares estimator is comparable to the infimum of the quadratic risks $\mathbb{E}_{f}  \parallel \hat{f}_{m}-f\parallel^{2} $ when $m$ varies in $\mathcal M$. This is connected to the possibility of choosing weights $x_m$ of the form $x_m=\alpha D_m$. The simplest scheme under which this can be done easily is the situation where the models are nested. In other words one starts from a family of linearly independent vectors $\phi_1, \phi_2,\ldots,\phi_N$ and each model $S_D$ with $1\leq D\leq N$ is merely defined as the linear span of $\phi_1, \phi_2,\ldots,\phi_D$. Indeed, in this case, since there is exactly one model per dimension, the choice $x_D=\alpha D$ leads to
$$\sum_{1\leq D \leq N}e^{-x_D}\leq \sum_{D\geq 1}e^{-\alpha D}=\frac{1}{e^\alpha-1}.$$
Choosing a sufficiently small value for $\alpha$ we finally derive from Theorem \ref{modelselect} that if the penalty is chosen as $\operatorname{pen}\left(
D\right) = \kappa\sigma ^2D$, with $\kappa >1$, for some constant $C'(\kappa)$ depending only on $\kappa$,
$$\mathbb{E}_{f}  \parallel \hat{f}_{\hat{D}}-f\parallel^{2}  
\leq C'(\kappa) \inf_{1\leq D\leq N} \mathbb{E}_{f} \parallel \hat{f}_{D}-f \parallel^{2}.$$
The same result will hold true if the number of models per dimension increases polynomially with respect to the dimension. The purpose of the following section is to show that in these situations if one takes a penalty of the form $\operatorname{pen}\left(m\right) = \kappa\sigma ^2D_m$, the value 
$\kappa =1 $ is indeed critical in the sense that, below this value the selection method becomes inconsistent. To enlighten this cut-off phenomenon, the lower tails probability bounds established in the section devoted to concentration will play a crucial role.

\subsection{Cut-off for the penalty: lower tails in action}\label{sec:Cut-off}
To exhibit this cut-off phenomenon for the penalty we shall restrict ourselves to the situation where all the models are included in a model $S_{m_N}$ with dimension $N$. We allow the list of models to depend on $N$ (we shall therefore write $\mathcal M_N$ instead of $\mathcal{M}$) and we shall let $N$ go to infinity, assuming  that the number of models is sub-exponential with respect to $N$, which more precisely means that 
\begin{equation}
	N^{-1}\log \#{\mathcal M_N}\rightarrow 0 \text{ as } N \rightarrow \infty \label{subexp}
\end{equation}
Note that in the nested case this assumption is satisfied and that it still holds true when the number of models with dimension $D$ is less that $CD^k$ since in this case $\#{\mathcal M_N}\leq CN^{k+1}$. We are now ready to state the announced negative result. This result has the same flavor as the one established in \cite{minpen} in the Gaussian framework but interestingly it is based solely on concentration arguments, without any extra properties (in \cite{minpen} the Gaussian framework is crucially involved since some specific lower tail bounds for non-central chi-square distributions are used to make the proof).
\begin{theorem}\label{theo:Cut-off}
	Let $\{S_m, m\in \mathcal M_N\}$ be a collection of linear subspaces of $\mathbb R^n$ such that all the models $S_m$ are included in some model $S_{m_N}$ with dimension $N$. Assume furthermore that condition (\ref{subexp}) on the cardinality of $\mathcal M_N$ is satisfied. Take a penalty function of the form 
	$$\operatorname{pen}\left(m\right) = \kappa\sigma ^2D_m$$
	and consider $\hat m$ minimizing the penalized least squares criterion (\ref{pencrit}). Assume that $\kappa<1$. Then, for any $\delta\in (0,1)$ there exists $N_0$ depending on $\delta$ and $\kappa$ but not on $f$ or $\sigma$ such that, whatever $f$, for all $N\geq N_0$ 
	\begin{equation}
		\mathbb P_f\{D_{\hat m}\geq N/2\}\geq 1-\delta \label{largemodel}
	\end{equation}
and the following lower bound on the expected risk holds true
	\begin{equation}
		\mathbb{E}_{f}  \parallel \hat{f}_{\hat{m}}-f\parallel^{2} \geq \parallel f_{m_N}-f\parallel^{2}+\sigma^2(N/4).\label{largerisk}
	\end{equation}
\end{theorem}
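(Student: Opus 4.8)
The plan is to compare the selected model with the largest one, $S_{m_N}$, which belongs to $\mathcal M_N$ and has dimension $N$. Writing $\Pi_m$ for the orthogonal projection onto $S_m$, we have $\|\hat f_m\|^2=\|\Pi_m Y\|^2$, so the criterion is $\operatorname{crit}(m)=-\|\Pi_m Y\|^2+\kappa\sigma^2 D_m$. Since $\hat m$ minimises the criterion (\ref{pencrit}), $\operatorname{crit}(\hat m)\le\operatorname{crit}(m_N)$; and since $S_{\hat m}\subseteq S_{m_N}$ there is an orthogonal decomposition $\Pi_{m_N}=\Pi_{\hat m}+\Pi_W$ with $W:=S_{m_N}\cap S_{\hat m}^{\perp}$ of dimension $N-D_{\hat m}$. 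Plugging this in and using $D_{m_N}=N$, the inequality $\operatorname{crit}(\hat m)\le\operatorname{crit}(m_N)$ collapses to the single relation
\begin{equation}
\|\Pi_W Y\|^{2}=\|\Pi_{m_N}Y\|^{2}-\|\Pi_{\hat m}Y\|^{2}\le\kappa\sigma^{2}\,(N-D_{\hat m}).\label{eq:cutoffkey}
\end{equation}

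The heart of the argument is a matching lower bound for $\|\Pi_W Y\|^{2}$, holding with probability at least $1-\delta$ and uniformly over the at most $\#\mathcal M_N$ possible values $W_m:=S_{m_N}\cap S_m^{\perp}$ of the random subspace $W$. I would expand $\|\Pi_W Y\|^{2}=\|\Pi_W f\|^{2}+2\sigma\langle\Pi_W f,\Pi_W\epsilon\rangle+\sigma^{2}\|\Pi_W\epsilon\|^{2}$. For the cross term, apply Hoeffding's inequality (\ref{ehoeff}) to the unit vector $\Pi_W f/\|\Pi_W f\|$ together with a union bound over $m\in\mathcal M_N$, which gives $|\langle\Pi_W f,\Pi_W\epsilon\rangle|\le\|\Pi_W f\|\,t$ with $t^{2}$ of order $\log\#\mathcal M_N+\log(1/\delta)$ off an event of probability $\le\delta/2$; since $2\sigma\|\Pi_W f\|\,t\le\|\Pi_W f\|^{2}+\sigma^{2}t^{2}$, the bias term $\|\Pi_W f\|^{2}$ is absorbed and $\|\Pi_W Y\|^{2}\ge\sigma^{2}\bigl(\|\Pi_W\epsilon\|^{2}-t^{2}\bigr)$. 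For the noise term, $\|\Pi_W\epsilon\|^{2}$ is a chi-square type statistic of dimension $N-D_{\hat m}$, so the lower tail bound (\ref{leftchi}), again with a union bound over $m\in\mathcal M_N$, gives $\|\Pi_W\epsilon\|^{2}\ge(N-D_{\hat m})-r_N$ off an event of probability $\le\delta/2$, where $r_N$ is of order $\sqrt{N\,(\log\#\mathcal M_N+\log(1/\delta))}$. By assumption (\ref{subexp}), $t^{2}=o(N)$ and $r_N=o(N)$. Combining this lower bound with (\ref{eq:cutoffkey}) gives, on an event of probability at least $1-\delta$, $(1-\kappa)(N-D_{\hat m})\le t^{2}+r_N=o(N)$; since $\kappa<1$ is fixed this forces $N-D_{\hat m}<N/2$ (indeed $D_{\hat m}\ge N-o(N)$) as soon as $N\ge N_0(\delta,\kappa)$, which proves (\ref{largemodel}).

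For the risk bound (\ref{largerisk}) I would re-run the same argument with the fixed value $\tfrac12$ in place of $\delta$, and additionally intersect with the event — of probability bounded below by a universal constant after adjusting the numerical constants, again via (\ref{leftchi}) and a union bound — on which $\|\Pi_m\epsilon\|^{2}\ge D_m-o(N)$ for every $m\in\mathcal M_N$. On the resulting event, of probability at least $\tfrac12$, we get $\chi_{\hat m}^{2}=\|\Pi_{\hat m}\epsilon\|^{2}\ge D_{\hat m}-o(N)\ge N-o(N)$, using $D_{\hat m}\ge N-o(N)$ delivered above. Two applications of Pythagoras' identity (together with (\ref{randomterm})) give $\|\hat f_{\hat m}-f\|^{2}=\|f-f_{m_N}\|^{2}+\|f_{m_N}-f_{\hat m}\|^{2}+\sigma^{2}\chi_{\hat m}^{2}\ge\|f-f_{m_N}\|^{2}+\sigma^{2}\chi_{\hat m}^{2}$, while $\|\hat f_{\hat m}-f\|^{2}\ge\|f-f_{m_N}\|^{2}$ always; hence, taking expectations and using that $\chi_{\hat m}^{2}\ge N-o(N)$ on an event of probability at least $\tfrac12$,
\[
\mathbb E_f\|\hat f_{\hat m}-f\|^{2}\ge\|f-f_{m_N}\|^{2}+\sigma^{2}\,\mathbb E_f\bigl[\chi_{\hat m}^{2}\bigr]\ge\|f-f_{m_N}\|^{2}+\tfrac12\sigma^{2}\,(N-o(N))\ge\|f-f_{m_N}\|^{2}+\sigma^{2}(N/4)
\]
once $N$ is large enough depending only on $\kappa$.

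The step I expect to be the real obstacle is the control of the cross term $\langle\Pi_W f,\Pi_W\epsilon\rangle$: bounding it crudely by $\|\Pi_W f\|\sqrt{N-D_{\hat m}}$ loses a factor $\sqrt N$ and only yields $D_{\hat m}\ge cN$ for some $c<1/2$. The decisive move is to keep the positive term $\|\Pi_W f\|^{2}$ in the expansion of $\|\Pi_W Y\|^{2}$ and use it to absorb the cross term via $2ab\le a^{2}+b^{2}$, so that the (possibly arbitrarily large) bias of the selected model in the directions of $W$ disappears entirely and only the noise term $\sigma^{2}\|\Pi_W\epsilon\|^{2}\approx\sigma^{2}(N-D_{\hat m})$ survives to be weighed against the penalty budget $\kappa\sigma^{2}(N-D_{\hat m})$ with $\kappa<1$. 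The other delicate point is the uniformity over $m\in\mathcal M_N$ in the two deviation bounds, and this is precisely where hypothesis (\ref{subexp}) enters: it is only because $\log\#\mathcal M_N=o(N)$ that the union-bound costs $t^{2}$ and $r_N$ stay negligible compared with $N$.
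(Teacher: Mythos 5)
Your proposal is correct and follows essentially the same route as the paper: the identity $\|\Pi_{W}Y\|^{2}=\chi^{2}_{m_N}-\chi^{2}_{m}$ (in $\epsilon$) plus the absorption of the cross term into the bias $\|\Pi_{W}f\|^{2}$ via $2ab\le a^{2}+b^{2}$ is exactly the paper's step leading to its inequality (\ref{lowpen}), and the subsequent use of the lower tail bound (\ref{leftchi}), Hoeffding's inequality, a union bound over $\mathcal M_N$, and assumption (\ref{subexp}) to make the deviations $o(N)$ matches the paper, as does the Markov/Pythagoras argument for (\ref{largerisk}).
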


\begin{proof}
	Let $m\in \mathcal M_N$ and first notice that since $S_m\subseteq S_{m_N}$, by orthogonality $-\parallel f_{m_N}\parallel^2+\parallel f_{m}\parallel^2=-\parallel f_{m_N}-f_m\parallel^2$. Let us now use formula (\ref{riskestimate}) to assert that
	$$-\parallel\hat f_{m_N} \parallel ^2+\parallel\hat f_{m} \parallel ^2=-\sigma^2(\chi^2_{m_N}-\chi^2_{m})+2\sigma\langle f_m-f_{m_N},\epsilon\rangle-\parallel f_{m_N}-f_m\parallel^2 \text{.}$$
	Let us set $g_m=(f_m-f_{m_N})/\parallel f_m-f_{m_N}\parallel$ if $f_m\neq f_{m_N}$ or $g_m=0$ otherwise. Using again $2ab\leq a^2+b^2$, the preceding identity leads to 
	$$-\parallel\hat f_{m_N} \parallel ^2+\parallel\hat f_{m} \parallel ^2\leq -\sigma^2(\chi^2_{m_N}-\chi^2_{m})+\sigma^2\langle g_m,\epsilon\rangle_{+}^2 \text{.}$$
	Using the definition of the penalized least-squares criterion, we finally derive the inequality that we shall start from to make the probabilistic analysis of the behavior of this criterion:
	\begin{equation}
		\sigma^{-2}(\operatorname{crit}(m_N)-\operatorname{crit}(m))\leq -(\chi^2_{m_N}-\chi^2_{m})+\langle g_m,\epsilon\rangle_{+}^2+\kappa(N-D_m) \text{.} \label{lowpen} 
	\end{equation}
	The point now is that for models such that $D_m\leq N/2$ the negative term 
	$-(1-\kappa)(N-D_m)$ stays below $-(1-\kappa)N/2$. We argue that this is 
	enough to ensure that, with high probability, for all such models 
	simultaneously $\operatorname{crit}(m_N)-\operatorname{crit}(m)<0$ and 
	therefore $D_{\hat m}$ has to be larger than $N/2$. To complete this road map 
	we make use of the lower tail probability bounds established in the 
	probabilistic section of the paper. Indeed, since $S_m\subseteq S_{m_N}$, the 
	quantity $\chi^2_{m_N}-\chi^2_{m}$ appears to be some pseudo chi-square 
	statistics with dimension $N-D_m\geq N/2$ to which we can apply the lower 
	tail inequality (\ref{leftchi}). If we do so, and if we use a union bound, we 
	derive that for all models such that $D_m\leq N/2$ 
	$$\sqrt{\chi^2_{m_N}-\chi^2_{m}}\geq \sqrt {(N-D_m-2)_+}-2\sqrt{2x}$$ 
while simultaneously by Hoeffding's inequality (\ref{ehoeff})
	$$\langle g_m,\epsilon\rangle_+\leq \sqrt{{2x}}$$
except on a set with probability less than $2\#{\mathcal M_N}\exp({-x})$. We choose $x=\log(2/\delta)+\log\# \mathcal M_N$ in order to warrant that the above inequalities simultaneously hold true except on a set with probability less than $\delta$. It is now time to use asymptotic arguments. If we take into account assumption (\ref{subexp}), we know that our choice of $x$ is small as compared to $N$ and therefore it is also small as compared to $N-D_m$ uniformly over the set of models such that $D_m\leq N/2$ when $N$ is large. Using this argument we derive from the above tail probability bounds that given $\eta>0$, if $N$ is large enough, the following inequalities hold for all models such that $D_m\leq N/2$
$$\sqrt{\chi^2_{m_N}-\chi^2_{m}}\geq \sqrt {(1-\eta)(N-D_m)}$$
and 
$$\langle g_m,\epsilon\rangle_+\leq \sqrt{\eta(N-D_m)}$$
except on a set with probability less than $\delta$. 
If $N$ is large enough, plugging these inequalities into (\ref{lowpen}) and choosing $\eta=(1-\kappa)/4$ leads to 
$$\sigma^{-2}(\operatorname{crit}(m_N)-\operatorname{crit}(m))\leq -(1-2\eta-\kappa)(N-D_m)\leq -\Big(\frac{1-\kappa}{2}\Big)(N-D_m)$$
for all models such that $D_m\leq N/2$, except on a set with probability less than $\delta$. The proof of (\ref{largemodel}) is now complete. Proving (\ref{largerisk}) is quite easy. We first observe that since $S_{m_N}$ includes all the other models
$$\parallel \hat{f}_{\hat{m}}-f\parallel^{2} \geq \parallel f_{m_N}-f\parallel^{2}+\parallel \hat{f}_{\hat{m}}-f_{\hat m}\parallel^{2}=\parallel f_{m_N}-f\parallel^{2}+\sigma^2 \chi^2_{\hat m} $$ 
so that it remains to bound  $\chi^2_{\hat m}$ in expectation from below. To do that we argue exactly as above to assert that if $N$ is large enough, for all models such that $D_m\geq N/2$ simultaneously
$$\chi^2_m\geq (2/3)D_m\geq N/3$$
except on a set with probability less than $\delta$. Combining this with (\ref{largemodel}) and using again a union bound argument we know that if $N$ is large enough $\chi^2_{\hat m}\geq N/3$ except on a set with probability less than $2\delta$. It remains to use Markov's inequality and choose $\delta=1/8$ to ensure that 
$$\mathbb E\left(\chi^2_{\hat m}\right)\geq (1-2\delta)N/3=N/4$$
completing the proof of (\ref{largerisk}).
\end{proof}
\\
\textbf{Comment}\\
Let us come back to the nested case for which one starts from a set of linearly independent vectors $\phi_1,\phi_2,\ldots,\phi_N$ and a model is merely the linear span $S_D$ of $\{\phi_j, 1\leq j \leq D \}$ and $D$ varies between $1$ and $N$. In this case the situation is clear. If one considers the penalized least squares model selection criterion 
$$\operatorname{crit}(D)=-\parallel\hat f_D\parallel^2+\kappa \sigma^2D$$
the two preceding theorems tell us that $\kappa =1$ is a critical value in the sense that if $\kappa$ is above this value the selected least squares estimator is comparable (up to some constant depending on $\kappa$) to the best estimator in the collection while below this value the criterion will tend to select the largest models whatever the target $f$ to be estimated. This cut-off is so visible (on simulations and on real data) that it can be used to estimate $\sigma^2$. Of course, the notion of a “large” model only makes sense if $N$ is large (and thus so is $n$). In the next and final section of the article, we shall see that this framework becomes very natural in the context of non-parametric estimation.
\subsection{Adaptive functional estimation}\label{sec:functional}
In this section, the goal is to estimate the function $f$ on the interval $[0,1]$  in the model
\begin{equation}\label{regf}
Y_k=f(t_k)+\sigma \epsilon_k,\quad k=1,\ldots,n, \end{equation}
where $t_k=k/n$ and we assume in the sequel that the $\epsilon_k$'s are 
independent Rademacher random variables but our results remain valid if they are only centered i.i.d. bounded random variables;  the noise level, $\sigma>0$, is assumed to be known. We shall also assume that $f$ is squared integrable and  $f(0)=f(1)$, so that we can expand $f$ on the Fourier basis $(\phi_j)_{j\geq 1}$ with $\phi_1\equiv 1$ and for any $j\geq 1$ and any $t\in [0,1]$,
$$\phi_{2j}(t)=\sqrt{2}\cos(2\pi jt)\quad\mbox{and}\quad \phi_{2j+1}(t)=\sqrt{2}\sin(2\pi jt).$$
Denoting $\theta=(\theta_j)_{j\geq 1}$ the sequence of the Fourier coefficients of $f$, we obtain:
$$f=\sum_{j=1}^{+\infty}\theta_j\phi_j.$$
We derive oracle inequalities in the same spirit as Theorem~\ref{modelselect}, except that we consider both the empirical norm associated with the design $t_k$'s and the functional ${\mathbb L}_2$-norm. We then introduce following notations: for any function $g$, we set:
$$\normn{g}^2=\frac{1}{n}\sum_{k=1}^ng^2(t_k),\quad \norm{g}^2=\int_0^1g^2(t)dt.$$
The associated scalar products are denoted $\langle\cdot,\cdot\rangle_n$ and $\langle\cdot,\cdot\rangle_{{\mathbb L}_2}$. 
We recall that the Fourier basis satisfies for any $1\leq j,j'\leq n-1$,
\begin{equation}\label{FourierDesign}
\langle\phi_j,\phi_{j'}\rangle_n=\frac{1}{n}\sum_{k=1}^n\phi_j(t_k)\phi_{j'}(t_k)=1_{\{j=j'\}},
\end{equation}
which makes its use suitable for our study. We consider a collection of models $\{S_m, m\in \mathcal M\}$,
with here
$$S_m=\mbox{span}\big\{\phi_j, j\in m\big\},$$
with $\mathcal M$ a set of subsets of $\{1,2,\ldots,n-1\}$. Similarly to \eqref{pencrit}, we consider for any $m\in\mathcal{M}$
the criterion
\begin{equation}\label{pencrit2}
	\operatorname{crit}(m)=-\normn{\hat{f}_{m}}^2+\operatorname{pen}\left(
m\right),
\end{equation}
with
$$ \hat{f}_{m}=\sum_{j\in m}\frac{1}{n}\sum_{k=1}^nY_k\phi_j(t_k)\phi_j.$$
Observe that if $Y$ is any (random) $1$-periodic ${\mathbb L}_2$-function such that $Y(t_k)=Y_k$, then $ \hat{f}_{m}$ is the projection of the function $Y$ onto $S_m$ for the empirical norm~$\normn{\cdot}:$
$$\hat{f}_{m}=\sum_{j\in m}\langle Y,\phi_j\rangle_n\phi_j.$$
Therefore, 
$$\normn{Y}^2-\normn{\hat{f}_{m}}^2=\normn{Y-\hat{f}_{m}}^2=\frac{1}{n}\sum_{k=1}^n\Big(Y_k-\hat{f}_{m}(t_k)\Big)^2,$$
which justifies the use of \eqref{pencrit2}. Observe also that $f_m$, the mean of $\hat{f}_{m}$, satisfies
$$f_m=\mathbb{E}_{f} (\hat{f}_{m})=\sum_{j\in m}\langle f,\phi_j\rangle_n\phi_j$$
and $f_m$ is the orthogonal projection of $f$ on $S_m$ for the empirical norm.
The following result is the analogue  of Theorem~\ref{modelselect} in the functional framework.
We denote
$${\mathcal S}_{n-1}=\mbox{span}(\phi_1,\ldots,\phi_{n-1}).$$
\begin{theorem}\label{modelselect2}
Let $\left\{  x_{m}\right\}  _{m\in\mathcal{M}}$ be some family
of positive numbers such that
\begin{equation}
\sum_{m\in\mathcal{M}}\exp\left(  -x_{m}\right)  =\Sigma<\infty\text{.}%
\label{EG2f'}%
\end{equation}
Let $K>1$ and assume that
\begin{equation}
\operatorname{pen}\left(  m\right)  \geq \frac{K\sigma^{2}}{n}\left(  \sqrt{D_{m}%
}+2\sqrt{2x_{m}}\right)  ^{2}\text{.}\label{e3pen1f}%
\end{equation}
Let $\hat{m}$ minimizing the penalized least-squares
criterion defined in \eqref{pencrit2} over $m\in\mathcal{M}$. The corresponding penalized least-squares estimator
$\hat{f}_{\hat{m}}$ satifies to the following risk bound
\begin{equation}
\mathbb{E}_{f} \normn{\hat{f}_{\hat{m}}-f}
^{2}  \leq C\left(  K\right)  \left\{  \inf_{m\in\mathcal{M}}\Big(
\normn{f_{m}-f}^{2}+\operatorname{pen}\left(  m\right)
\Big)+\frac{\left(  1+\Sigma\right)  \sigma^{2}}{n}\right\}
\text{,}\label{e4riskoracle}%
\end{equation}
where $C\left(  K\right)  $ depends only on $K$. We also have:
\begin{equation}
\mathbb{E}_{f} \norm{\hat{f}_{\hat{m}}-f}
^{2}  \leq C'\left(  K\right)  \left\{  \inf_{m\in\mathcal{M}}\Big(
\norm{f_{m}-f}^{2}+\operatorname{pen}\left(  m\right)
\Big)  +\inf_{g\in {\mathcal S}_{n-1}}\|f-g\|^2_{{\mathbb L}_{\infty}}+\frac{\left(  1+\Sigma\right)  \sigma^{2}}{n}\right\}
\text{,}\label{e5riskoracle}%
\end{equation}
where $C'\left(  K\right)  $ depends only on $K$ and $\|\cdot\|_{{\mathbb L}_{\infty}}$ denotes the sup-norm on $[0,1]$.
\end{theorem}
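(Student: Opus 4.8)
The plan is to handle the two assertions in turn: the empirical-norm bound \eqref{e4riskoracle} reduces almost immediately to Theorem \ref{modelselect}, and the ${\mathbb L}_2$ bound \eqref{e5riskoracle} is then deduced from it by a norm-comparison argument.

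For \eqref{e4riskoracle}, I would identify a function with its vector of values on the grid $\{t_1,\ldots,t_n\}$. Since every $m\in\mathcal M$ is a subset of $\{1,\ldots,n-1\}$, the orthogonality relations \eqref{FourierDesign} show that this evaluation map is injective on ${\mathcal S}_{n-1}$, that it carries $S_m$ onto a subspace $V_m\subseteq{\mathbb R}^n$ with $\dim V_m=D_m$, and that it turns the observation model \eqref{regf} into the ordinary regression model $\mathcal Y=\mathcal F+\sigma\epsilon$ on ${\mathbb R}^n$, with $\mathcal F=\big(f(t_k)\big)_{1\le k\le n}$. Because $\langle\cdot,\cdot\rangle_n=n^{-1}\langle\cdot,\cdot\rangle$, the empirical-norm projection and the Euclidean projection onto $V_m$ are the same operator, so on the grid $\hat f_m$ is the Euclidean projection of $\mathcal Y$ onto $V_m$ and $f_m$ that of $\mathcal F$; moreover, after multiplication by $n$, the criterion \eqref{pencrit2} becomes exactly the criterion \eqref{pencrit} of Theorem \ref{modelselect} applied to the collection $\{V_m\}$, the same weights $\{x_m\}$, the noise level $\sigma$, and the penalty function $m\mapsto n\,\operatorname{pen}(m)$ — which satisfies the lower bound \eqref{e3pen1} precisely because \eqref{e3pen1f} holds. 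Theorem \ref{modelselect} then provides a bound on $\mathbb{E}_f\|\hat f_{\hat m}-\mathcal F\|^2$ in terms of $\inf_m(\|(\text{proj. of }\mathcal F\text{ onto }V_m)-\mathcal F\|^2+n\,\operatorname{pen}(m))+(1+\Sigma)\sigma^2$; dividing by $n$ and recalling that the Euclidean projection of $\mathcal F$ onto $V_m$ is the grid vector of $f_m$, so that $n^{-1}\|\hat f_{\hat m}-\mathcal F\|^2=\normn{\hat f_{\hat m}-f}^2$ and $n^{-1}\|f_m-\mathcal F\|^2=\normn{f_m-f}^2$, yields \eqref{e4riskoracle}. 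There is no real obstacle in this part.

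For \eqref{e5riskoracle}, the only genuine difficulty is passing from $\normn{\cdot}$ to $\norm{\cdot}$. The bridge is that on ${\mathcal S}_{n-1}$ the two norms coincide: for $h=\sum_{j=1}^{n-1}a_j\phi_j$ one has $\norm{h}^2=\sum_j a_j^2=\normn{h}^2$, by ${\mathbb L}_2$-orthonormality of the $\phi_j$ and by \eqref{FourierDesign}. To compare distances to $f$, which lies outside ${\mathcal S}_{n-1}$, I would fix an arbitrary $g\in{\mathcal S}_{n-1}$ and set $r=f-g$, so that $\normn{r}\le\|r\|_{{\mathbb L}_{\infty}}$ and $\norm{r}\le\|r\|_{{\mathbb L}_{\infty}}$; then for every $h\in{\mathcal S}_{n-1}$ — and both $\hat f_{\hat m}$ and each $f_m$ belong to ${\mathcal S}_{n-1}$ — the triangle inequality together with the coincidence of the norms on ${\mathcal S}_{n-1}$ gives
$$\norm{h-f}\le\norm{h-g}+\norm{r}=\normn{h-g}+\norm{r}\le\normn{h-f}+2\|r\|_{{\mathbb L}_{\infty}}$$
and, symmetrically, $\normn{h-f}\le\norm{h-f}+2\|r\|_{{\mathbb L}_{\infty}}$.

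From here the conclusion is bookkeeping. Squaring and using $(a+b)^2\le2a^2+2b^2$ yields $\norm{\hat f_{\hat m}-f}^2\le2\normn{\hat f_{\hat m}-f}^2+8\|r\|_{{\mathbb L}_{\infty}}^2$ and $\normn{f_m-f}^2\le2\norm{f_m-f}^2+8\|r\|_{{\mathbb L}_{\infty}}^2$ for every $m$; taking expectations in the first, inserting \eqref{e4riskoracle}, bounding $\normn{f_m-f}^2$ inside the infimum by the second, and collecting the $g$-free terms into one constant depending only on $K$ gives
$$\mathbb{E}_f\norm{\hat f_{\hat m}-f}^2\le C'(K)\Big\{\inf_{m\in\mathcal M}\big(\norm{f_m-f}^2+\operatorname{pen}(m)\big)+\|f-g\|_{{\mathbb L}_{\infty}}^2+\frac{(1+\Sigma)\sigma^2}{n}\Big\}.$$
Since the left-hand side is free of $g$, taking the infimum over $g\in{\mathcal S}_{n-1}$ on the right-hand side produces exactly \eqref{e5riskoracle}. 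I expect the norm-transfer step — realizing that the two norms coincide on ${\mathcal S}_{n-1}$ and that a good sup-norm approximant is what converts $\normn{\cdot}$-distances into $\norm{\cdot}$-distances, at the cost of the term $\inf_{g\in{\mathcal S}_{n-1}}\|f-g\|^2_{{\mathbb L}_{\infty}}$ — to be the only point requiring real thought; the rest is a transcription of Theorem \ref{modelselect} together with elementary inequalities.
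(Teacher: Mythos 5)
Your proof is correct and follows essentially the same route as the paper: the paper likewise deduces \eqref{e5riskoracle} from \eqref{e4riskoracle} via the identity $\norm{h}=\normn{h}$ on ${\mathcal S}_{n-1}$ together with the same two triangle-inequality chains through an arbitrary $g\in{\mathcal S}_{n-1}$, costing the extra term $\inf_{g\in{\mathcal S}_{n-1}}\|f-g\|^2_{{\mathbb L}_\infty}$. The only (cosmetic) difference is in the first part: where you reduce to Theorem \ref{modelselect} as a black box via the exact grid isometry $\normn{\cdot}^2=n^{-1}\|\cdot\|^2$ and the rescaled penalty $n\operatorname{pen}(m)$, the paper instead re-runs the proof of Theorem \ref{modelselect} with the empirical inner product and $\chi_{m,m'}$ redefined accordingly --- the two amount to the same computation.
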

\begin{proof}
We observe that
$$\normn{\hat{f}_{m}}^2=\sum_{j\in m}\langle Y,\phi_j\rangle_n^2.$$
To prove the first point of Theorem~\ref{modelselect2}, we then follow easily the same lines as used to prove Theorem~\ref{modelselect} with
$$\chi^2_m=\frac{1}{\sigma^2}\|\hat f_m-f_m\|_n^2.$$
Proposition~\ref{fundconc}  is applied with
$$
\chi _{m,m'}=\sup_{g\in S_{m'}} \frac{\langle g-f_m,Y-f\rangle_n}{\left\Vert f_{m}-f\right\Vert_n+\left\Vert g-f\right\Vert_n}.
$$

The last point is a simple consequence of \eqref{e4riskoracle} and
$$\norm{g}=\normn{g}$$
for any function $g\in {\mathcal S}_{n-1}$. Indeed, we have, for any $g\in {\mathcal S}_{n-1}$,
\begin{align*}
\norm{\hat{f}_{\hat{m}}-f}&\leq \norm{\hat{f}_{\hat{m}}-g}+\norm{f-g}\\
&\leq \normn{\hat{f}_{\hat{m}}-g}+\|f-g\|_{{\mathbb L}_{\infty}}\\
&\leq \normn{\hat{f}_{\hat{m}}-f}+2\|f-g\|_{{\mathbb L}_{\infty}}
\end{align*}
and
\begin{align*}
\normn{f_{m}-f}&\leq \normn{f_{m}-g}+\normn{f-g}\\
&\leq \norm{f_{m}-g}+\|f-g\|_{{\mathbb L}_{\infty}}\\
&\leq \norm{f_{m}-f}+2\|f-g\|_{{\mathbb L}_{\infty}}.
\end{align*}
We have used that $\normn{f-g}\leq \|f-g\|_{{\mathbb L}_{\infty}}$ and $\norm{f-g}\leq \|f-g\|_{{\mathbb L}_{\infty}}$.
\end{proof}

To prove optimality of our procedure, we consider the  minimax setting and establish rates of our estimate on the class of (periodized) Sobolev spaces. We recall the definition of the Sobolev ball for integer smoothness $\alpha$.
\begin{definition}
Let $\alpha\in\{1,2,\ldots\}$ and $R>0$. The Sobolev ball $W(\alpha,R)$ is defined by
\begin{align*}
W(\alpha,R)&=\Bigg\{g\in[0,1]\longmapsto{\mathbb R}:\ g^{(\alpha-1)} \mbox{ is absolutely continuous and }\\
&\hspace{6cm} \int_0^1\big(g^{(\alpha)}(x)\big)^2dx\leq R^2\Bigg\}.
\end{align*}
\end{definition}
In our setting, we consider the periodic Sobolev ball $W^{per}(\alpha,R)$ defined by
\begin{align*}
W^{per}(\alpha,R)&=\Bigg\{g\in W(\alpha,R):\ g^{(j)}(0)=g^{(j)}(1), \ j=0,1,\ldots,\alpha-1\Bigg\}.
\end{align*}
In subsequent Theorem~\ref{rates}, we consider the model selection procedure with $\mathcal M$ such that $m\in \mathcal M$ if and only if $m$ is of the form $m=\{1,\ldots,D\}$ for some $1\leq D\leq n-1$. In this case, $D_m=D$. Applying Theorem~\ref{modelselect2} with $x_m=xD_m$ for any arbitrary constant $x$
and 
$$\operatorname{pen}\left(  m\right) = \frac{K\sigma^{2}}{n}\left(  \sqrt{D_{m}%
}+2\sqrt{2x_{m}}\right)  ^{2},$$
for some constant $K>1$, we obtain:
\begin{theorem}\label{rates}
Let $\alpha\geq 1$ and $R>0$. Then, we have:
$$\sup_{f\in W^{per}(\alpha,R)}E\norm{\hat{f}_{\hat{m}}-f}^2\leq Cn^{-\frac{2\alpha}{2\alpha+1}},$$
where $C$ depends on $\sigma$, $\alpha$ and $R$.
\end{theorem}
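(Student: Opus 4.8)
The plan is to derive Theorem~\ref{rates} from the $\mathbb L_2$ oracle inequality \eqref{e5riskoracle} of Theorem~\ref{modelselect2}, applied to the nested collection $\mathcal M=\{\{1,\dots,D\}:1\le D\le n-1\}$ with weights $x_m=xD_m$, by combining three ingredients: the classical Fourier description of periodic Sobolev balls, an aliasing estimate, and a bias--variance balance. Throughout, $\theta=(\theta_j)_{j\ge1}$ denotes the Fourier coefficients of $f\in W^{per}(\alpha,R)$. Differentiating the trigonometric basis $\alpha$ times and using the periodicity constraints defining $W^{per}(\alpha,R)$ gives $\|f^{(\alpha)}\|_{{\mathbb L}_2}^2=\sum_{j\ge1}(2\pi j)^{2\alpha}(\theta_{2j}^2+\theta_{2j+1}^2)\le R^2$, whence, since $j\le 2\pi\lfloor j/2\rfloor$ for $j\ge2$, there is $c_\alpha$ with $\sum_{j\ge2}j^{2\alpha}\theta_j^2\le c_\alpha R^2$. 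I will use two consequences: the tail bias $\sum_{j>D}\theta_j^2\le c_\alpha R^2 D^{-2\alpha}$ for $1\le D\le n-1$; and, by Cauchy--Schwarz with the weights $j^{-\alpha}$, $\sum_{j\ge n}|\theta_j|\le c_\alpha' R\,n^{-\alpha+1/2}$. Since $|\phi_j|\le\sqrt2$, the latter controls the $\mathbb L_\infty$ approximation term in \eqref{e5riskoracle}: $\inf_{g\in\mathcal S_{n-1}}\|f-g\|_{{\mathbb L}_\infty}\le\big\|f-\sum_{j\le n-1}\theta_j\phi_j\big\|_{{\mathbb L}_\infty}\le\sqrt2\sum_{j\ge n}|\theta_j|$, so this term is $O_\alpha(R^2 n^{-2\alpha+1})$.

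Next I would control the oracle part $\inf_m\big(\norm{f_m-f}^2+\operatorname{pen}(m)\big)$, the only nonroutine point being that $f_m$ is here the \emph{empirical} projection $\sum_{j\le D}\langle f,\phi_j\rangle_n\phi_j$ rather than the $\mathbb L_2$ one. Since $(\phi_j)_{j\ge1}$ is $\mathbb L_2$-orthonormal, $\norm{f_m-f}^2=\sum_{j\le D}|\langle f,\phi_j\rangle_n-\theta_j|^2+\sum_{j>D}\theta_j^2$, and the second sum is $\le c_\alpha R^2D^{-2\alpha}$. For the first sum, \eqref{FourierDesign} gives $\langle f,\phi_j\rangle_n-\theta_j=\langle r,\phi_j\rangle_n$ with $r=\sum_{j\ge n}\theta_j\phi_j$; as $(\phi_j)_{1\le j\le n-1}$ is $\langle\cdot,\cdot\rangle_n$-orthonormal, Bessel's inequality yields $\sum_{j\le D}|\langle r,\phi_j\rangle_n|^2\le\normn{r}^2\le\|r\|_{{\mathbb L}_\infty}^2\le 2\big(\sum_{j\ge n}|\theta_j|\big)^2=O_\alpha(R^2 n^{-2\alpha+1})$. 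Hence $\norm{f_m-f}^2\le c_\alpha R^2\big(D^{-2\alpha}+n^{-2\alpha+1}\big)$ for $m=\{1,\dots,D\}$. Because $\operatorname{pen}(m)=\tfrac{K\sigma^2}{n}D\,(1+2\sqrt{2x})^2$, taking $D^\star=\lceil n^{1/(2\alpha+1)}\rceil$ (which lies in $\{1,\dots,n-1\}$ once $n$ is large, since $\alpha\ge1$) balances $R^2(D^\star)^{-2\alpha}$ against $\sigma^2 D^\star/n$ and gives $\inf_m\big(\norm{f_m-f}^2+\operatorname{pen}(m)\big)\le C(K,x)(R^2+\sigma^2)\,n^{-2\alpha/(2\alpha+1)}+O_\alpha(R^2 n^{-2\alpha+1})$.

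Finally I would assemble everything. With $x_m=xD_m$ one has $\Sigma=\sum_{D=1}^{n-1}e^{-xD}\le(e^x-1)^{-1}$, a constant, so the residual term $(1+\Sigma)\sigma^2/n$ in \eqref{e5riskoracle} is $O(\sigma^2/n)$. Plugging the three estimates into \eqref{e5riskoracle} and using that for $\alpha\ge1$ both $2\alpha-1$ and $1$ are $\ge\frac{2\alpha}{2\alpha+1}$ — so that $n^{-2\alpha+1}$ and $n^{-1}$ are dominated by $n^{-2\alpha/(2\alpha+1)}$ — yields $\mathbb E_f\norm{\hat f_{\hat m}-f}^2\le C\,n^{-2\alpha/(2\alpha+1)}$, with $C$ depending only on $K,x,\alpha,R,\sigma$, hence only on $\alpha,R,\sigma$ since $K$ and $x$ are fixed. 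The bound is uniform over $W^{per}(\alpha,R)$ because every estimate above depends on $f$ only through the membership $f\in W^{per}(\alpha,R)$, and the finitely many small values of $n$ (for which $D^\star>n-1$) are absorbed into $C$, the risk being trivially bounded on the Sobolev ball via \eqref{e5riskoracle}.

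The step I expect to be the main obstacle is the one in the middle paragraph: the aliasing bookkeeping that bounds $\langle f,\phi_j\rangle_n-\theta_j$ and $\|f-\sum_{j\le n-1}\theta_j\phi_j\|_{{\mathbb L}_\infty}$. This is precisely the place where working with the empirical norm (rather than the exact $\mathbb L_2$ Fourier projection) costs something, and where the hypothesis $\alpha\ge1$ is essential: it guarantees that $(\theta_j)$ is summable with enough room that the aliasing error is $O(n^{-2\alpha+1})$, which is negligible against the target rate $n^{-2\alpha/(2\alpha+1)}$. Everything else is elementary: the Fourier characterisation of $W^{per}(\alpha,R)$ is standard, and the bias--variance optimisation is routine.
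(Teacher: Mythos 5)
Your proof is correct and follows essentially the same route as the paper's: the oracle inequality \eqref{e5riskoracle}, the Fourier--ellipsoid characterization of $W^{per}(\alpha,R)$, the aliasing control via $\sum_{i\ge n}|\theta_i|\lesssim R\,n^{-\alpha+1/2}$ for both the empirical bias and the ${\mathbb L}_\infty$ term, and the choice $D\asymp n^{1/(2\alpha+1)}$. The only (harmless) deviation is that you bound $\sum_{j\in m}\big(\langle f,\phi_j\rangle_n-\theta_j\big)^2$ by Bessel's inequality in the empirical inner product, getting $O(n^{-2\alpha+1})$, whereas the paper bounds each summand by its maximum and gets the slightly cruder $O(D_m n^{-2\alpha+1})$ --- both are negligible against the target rate once $\alpha\ge 1$.
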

It can be proved by using standard arguments that
$$\liminf_{n\to+\infty}\inf_{T_n} \sup_{f\in W^{per}(\alpha,R)}E\Big[n^{\frac{2\alpha}{2\alpha+1}}\norm{T_n-f}^2\Big]\geq \tilde C,$$
where $\inf_{T_n}$ denotes the infimum over all estimators and where the constant $\tilde C$ depends on $\sigma$, $\alpha$ and $R$. Therefore, the previous theorem shows that $\hat{f}_{\hat{m}}$ achieves the optimal  minimax rate. It is also adaptive since it does not depend on the parameters $\alpha$ and $R$ which are unknown in practice.

\medskip

\begin{proof}
The set $W^{per}(\alpha,R)$ can be characterized by Fourier coefficients and by using the following proposition established in \cite{Tsybakov}:
\begin{proposition}\label{ellipsoid:prop}
Let $\alpha\in\{1,2,\ldots\}$ and $R>0$. Then, the function $f$ belongs to $W^{per}(\alpha,R)$ if and only if the sequence of its Fourier coefficients $\theta=(\theta_j)_{j\geq 1}$ belongs to the ellipsoid $\Theta(c,r)$ defined by
$$\Theta(c,r)=\Bigg\{\theta\in \ell_2:\ \sum_{j=1}^{+\infty}c_j^2\theta_j^2\leq r^2\Bigg\},$$
with $r=R/\pi^{\alpha}$ and
$$c_j=\left\{
\begin{array}{cll}
 j^{\alpha} &\mbox{ if}   &j \mbox{ is even,}  \\
  (j-1)^{\alpha}  &\mbox{ if}    & j\mbox{ is odd.}   
  \end{array}
\right.$$
\end{proposition}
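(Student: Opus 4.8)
The plan is to reduce both implications to a single Parseval-type identity: for every $g$ such that $g^{(\alpha-1)}$ is absolutely continuous, $g^{(\alpha)}\in{\mathbb L}_2$, and $g^{(k)}(0)=g^{(k)}(1)$ for $k=0,1,\ldots,\alpha-1$, one has
$$\norm{g^{(\alpha)}}^2=\pi^{2\alpha}\sum_{j\geq 1}c_j^2\theta_j^2,\qquad\text{where }\theta_j=\langle g,\phi_j\rangle_{{\mathbb L}_2}.$$
Granting this, the forward implication is immediate, since it says exactly that $\norm{g^{(\alpha)}}^2\leq R^2$ is equivalent to $\sum_{j\geq 1}c_j^2\theta_j^2\leq R^2/\pi^{2\alpha}=(R/\pi^\alpha)^2=r^2$; and the converse reduces to showing that a sequence $\theta\in\Theta(c,r)$ is the sequence of Fourier coefficients of some $g$ lying in $W^{per}(\alpha,R)$, after which the identity again finishes the job.

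To establish the identity I would first record the elementary action of differentiation on the Fourier basis: $\phi_1'=0$, and for every $j\geq 1$, $\phi_{2j}'=-2\pi j\,\phi_{2j+1}$ and $\phi_{2j+1}'=2\pi j\,\phi_{2j}$, so that $\phi_{2j}''=\phi_{2j+1}''=-(2\pi j)^2\,(\text{same basis function})$ and, iterating, $\alpha$ differentiations send the orthonormal pair $(\phi_{2j},\phi_{2j+1})$ to $(2\pi j)^\alpha$ times an orthonormal pair spanning the same plane (up to sign and a possible transposition), while $\phi_1^{(\alpha)}=0$. Next, for a general $g$ in the class I would compute $\langle g^{(\alpha)},\phi_j\rangle_{{\mathbb L}_2}$ by integrating by parts $\alpha$ times; at the $k$-th step the boundary contribution involves $g^{(\alpha-k)}$ and $\phi_j^{(k-1)}$ evaluated at $0$ and $1$, and it vanishes because $g^{(\alpha-k)}(0)=g^{(\alpha-k)}(1)$ (a periodic boundary condition, valid for $1\leq k\leq\alpha$) while every $\phi_j^{(k-1)}$ is $1$-periodic. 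Hence the Fourier coefficients of $g^{(\alpha)}$ are obtained from those of $g$ by the scaling/rotation above, and Parseval applied to $g^{(\alpha)}\in{\mathbb L}_2$, together with the bookkeeping $c_1=0$ and $c_{2j}=c_{2j+1}=(2j)^\alpha$, gives $\norm{g^{(\alpha)}}^2=\sum_{j\geq 1}(2\pi j)^{2\alpha}(\theta_{2j}^2+\theta_{2j+1}^2)=\pi^{2\alpha}\sum_{j\geq 1}c_j^2\theta_j^2$.

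For the converse, given $\theta\in\Theta(c,r)$ I would set $g=\sum_{j\geq 1}\theta_j\phi_j$, which converges in ${\mathbb L}_2$ since $\theta\in\ell_2$. To see $g\in W^{per}(\alpha,R)$, observe that for $0\leq k\leq\alpha-1$ the termwise $k$-th derivative series has general term bounded in sup-norm (for $j\geq 2$) by a constant times $j^{k}|\theta_j|=j^{-(\alpha-k)}c_j|\theta_j|$, so by Cauchy–Schwarz against $\sum_{j\geq 2}j^{-2(\alpha-k)}<\infty$ (using $\alpha-k\geq 1$) it converges absolutely and uniformly; thus $g\in C^{\alpha-1}$, and since each $\phi_j^{(k)}$ is $1$-periodic we get $g^{(k)}(0)=g^{(k)}(1)$ for $k\leq\alpha-1$. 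Likewise the termwise $\alpha$-th derivative series is Cauchy in ${\mathbb L}_2$ because $\sum_{j\geq 1}c_j^2\theta_j^2<\infty$, so $g^{(\alpha-1)}$ is the indefinite integral of an ${\mathbb L}_2$ function (hence absolutely continuous) with $g^{(\alpha)}$ equal to that limit; applying the identity of the previous step yields $\norm{g^{(\alpha)}}^2=\pi^{2\alpha}\sum_{j\geq 1}c_j^2\theta_j^2\leq\pi^{2\alpha}r^2=R^2$, so $g\in W^{per}(\alpha,R)$.

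The main obstacle is making the first step rigorous: one must justify that every boundary term in the iterated integration by parts genuinely cancels — which is precisely where the periodic conditions $g^{(j)}(0)=g^{(j)}(1)$ are used — and one must be careful that $g^{(\alpha)}$ is only an almost-everywhere (weak) derivative, so the whole computation has to be phrased through the absolutely continuous representatives of $g,g',\ldots,g^{(\alpha-1)}$ rather than through classical differentiation. Everything else, namely the uniform- and ${\mathbb L}_2$-convergence estimates in the converse and the index bookkeeping matching $c_j$ to $(2\pi j)^\alpha$, is routine.
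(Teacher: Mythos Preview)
Your argument is correct. The Parseval identity $\norm{g^{(\alpha)}}^{2}=\pi^{2\alpha}\sum_{j\geq 1}c_j^{2}\theta_j^{2}$, obtained by iterating integration by parts and using the periodic boundary conditions $g^{(k)}(0)=g^{(k)}(1)$ ($0\leq k\leq\alpha-1$) to kill each boundary term, is exactly the heart of the matter; your bookkeeping $c_1=0$, $c_{2j}=c_{2j+1}=(2j)^{\alpha}$ and the Cauchy--Schwarz estimate for the uniform convergence of the lower-order termwise derivatives are both fine.

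There is, however, nothing to compare against in the paper: the proposition is not proved there but quoted from Tsybakov's book (the line introducing it reads ``the following proposition established in \cite{Tsybakov}''). So you have supplied a complete self-contained proof where the paper only provides a reference. For what it is worth, the argument you wrote is essentially the classical one that appears in that reference.
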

Now, we use Inequality \eqref{e5riskoracle} of Theorem~\ref{modelselect2}. Let $m\in{\mathcal M}$ be fixed. Proposition~\ref{ellipsoid:prop} allows to control the bias term:
\begin{align*}
\norm{f_{m}-f}^{2}&=\sum_{j\in m}\big(\langle f,\phi_j\rangle_n-\theta_j\big)^2+\sum_{j\not\in m}\theta_j^2.
\end{align*}
For the first term, we have for any $j\in m$,
\begin{align*}
\langle f,\phi_j\rangle_n-\theta_j&=\frac{1}{n}\sum_{k=1}^n\sum_{i=1}^{+\infty}\theta_i\phi_i(t_k)\phi_j(t_k)-\theta_j\\
&=\sum_{i=1}^{n-1}\theta_i\times\frac{1}{n}\sum_{k=1}^n\phi_i(t_k)\phi_j(t_k)-\theta_j+\frac{1}{n}\sum_{k=1}^n\sum_{i=n}^{+\infty}\theta_i\phi_i(t_k)\phi_j(t_k)\\
&=\frac{1}{n}\sum_{k=1}^n\sum_{i=n}^{+\infty}\theta_i\phi_i(t_k)\phi_j(t_k).
\end{align*}
Thus,
$$\max_{j\in m}|\langle f,\phi_j\rangle_n-\theta_j|\leq 2 \sum_{i=n}^{+\infty}|\theta_i|$$
and
\begin{align*}
\norm{f_{m}-f}^{2}&=\sum_{j\in m}\big(\langle f,\phi_j\rangle_n-\theta_j\big)^2+\sum_{j\not\in m}\theta_j^2\\
&\leq 4D_m\Big(\sum_{i=n}^{+\infty}|\theta_i|\Big)^2+\sum_{j=D_m+1}^{+\infty}\theta_j^2\\
&\leq 4D_m\times\sum_{i=1}^{+\infty} c_i^2\theta_i^2\times\sum_{i\geq n}c_i^{-2}+D_m^{-2\alpha}\sum_{j=1}^{\infty}c_j^2\theta_j^2\\
&\leq c_{\alpha, R}\Big(D_mn^{-2\alpha+1}+D_m^{-2\alpha}\Big),
\end{align*}
with $c_{\alpha, R}$ only depending on $\alpha$ and $R$. We have used $\alpha>1/2$. We also have: \begin{align*}
\inf_{g\in {\mathcal S}_{n-1}}\|f-g\|_{{\mathbb L}_{\infty}}&\leq  \Big\|\sum_{i\geq n}\theta_i\phi_i\Big\|_{{\mathbb L}_{\infty}}\leq\sqrt{2}\sum_{i\geq n}|\theta_i|\leq \sqrt{2}\Big(\sum_{i=1}^{+\infty} c_i^2\theta_i^2\times\sum_{i\geq n}c_i^{-2}\Big)^{1/2}.
\end{align*}
Finally,
\begin{align*}
\inf_{g\in {\mathcal S}_{n-1}}\|f-g\|_{{\mathbb L}_{\infty}}&\leq c'_{\alpha, R}n^{-\alpha+1/2},
\end{align*}
with $c'_{\alpha, R}$ only depending on $\alpha$ and $R$. To conclude, we observe that
\begin{align*}
&\inf_{m\in\mathcal{M}}\Big(
\norm{f_{m}-f}^{2}+\operatorname{pen}\left(  m\right)
\Big)  +\inf_{g\in {\mathcal S}_{n-1}}\|f-g\|^2_{{\mathbb L}_{\infty}}+\frac{\left(  1+\Sigma\right)  \sigma^{2}}{n}\\
&\leq C\inf_{1\leq D_m\leq n-1}\Bigg\{D_mn^{-2\alpha+1}+D_m^{-2\alpha}+\frac{D_m\sigma^2}{n}\Bigg\}+n^{-2\alpha+1}+\frac{\left(  1+\Sigma\right)  \sigma^{2}}{n},
\end{align*}
with $C$ depending on $\alpha$ and $R$. We take $D_m\in \{1,\ldots, n-1\}$ of order $(n/\sigma^2)^{1/(2\alpha+1)}$ to conclude. Observe that the assumption $\alpha\geq 1$ allows to state that the term $D_mn^{-2\alpha+1}$ is smaller than $D_m^{-2\alpha}\vee\frac{D_m\sigma^2}{n}$, up to a constant.
\end{proof}

We end this section by deriving the cut-off phenomenon for the penalty in the functional setting. Even if the analogous general results of Section~\ref{sec:Cut-off} can be obtained, we only consider the case where the collection of models ${\mathcal M}$ is the following: a model $m\in {\mathcal M}$ if and only if it is of the form $m=\{1,\ldots,d\}$ for some $d\in\{1,\ldots,n-1\}$. In particular, all models are nested and $\#{\mathcal M}=n-1$. For sake of simplicity, we further assume that $f\in{\mathcal S}_{n-1}$.
Mimicking the proof of Theorem~\ref{theo:Cut-off}, we obtain:
\begin{theorem}\label{theo2:Cut-off}
Take a penalty function of the form 
	$$\operatorname{pen}\left(m\right) = \frac{\kappa\sigma ^2D_m}{n}$$
	and consider $\hat m$ minimizing the penalized least squares criterion (\ref{pencrit2}). Assume that $\kappa<1$. Then, for any $\delta\in (0,1)$ there exists $N_0$ depending on $\delta$ and $\kappa$ but not on $f$ or $\sigma$ such that, whatever $f\in{\mathcal S}_{n-1}$, for all $n\geq N_0$ 
	\begin{equation*}
		\mathbb P_f\{D_{\hat m}\geq n/2\}\geq 1-\delta
	\end{equation*}
and the following lower bounds on the expected risks hold true
	\begin{equation*}
		\mathbb{E}_{f}  \normn{\hat{f}_{\hat{m}}-f}^{2} \geq\frac{\sigma^2}{4},\quad\mathbb{E}_{f}  \norm{\hat{f}_{\hat{m}}-f}^{2} \geq\frac{\sigma^2}{4}.
	\end{equation*}
\end{theorem}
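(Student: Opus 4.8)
The plan is to transcribe, almost verbatim, the proof of Theorem~\ref{theo:Cut-off} into the functional framework of Section~\ref{sec:functional}; the only genuinely new ingredients are the bookkeeping of the $1/n$ normalization of $\normn{\cdot}$ and the use of the exact orthonormality \eqref{FourierDesign} of the Fourier basis at the design points. Write $N=n-1$, let $m_N=\{1,\ldots,n-1\}$ be the largest model, so that $D_{m_N}=N$ and $S_{m_N}={\mathcal S}_{n-1}$, and introduce the rescaled vectors $\psi_j\in{\mathbb R}^n$ with $k$-th coordinate $n^{-1/2}\phi_j(t_k)$. By \eqref{FourierDesign} the family $(\psi_j)_{1\le j\le n-1}$ is orthonormal for the standard Euclidean scalar product of ${\mathbb R}^n$, one has $\langle g,\epsilon\rangle_n=n^{-1/2}\langle h,\epsilon\rangle$ whenever $h_k=n^{-1/2}g(t_k)$, and for any model $m$
\[
n\chi_m^2=\frac{n}{\sigma^2}\normn{\hat f_m-f_m}^2=\sum_{j\in m}\langle\epsilon,\psi_j\rangle^2
\]
is a genuine chi-square type statistics with $D_m$ degrees of freedom, to which \eqref{rightchi} and \eqref{leftchi} apply. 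Since $f\in{\mathcal S}_{n-1}$ and the $\phi_j$ are orthonormal for $\langle\cdot,\cdot\rangle_n$, the empirical projection $f_{m_N}$ equals $f$; this plays the role of the identity $\normn{f_{m_N}-f}=0$ (not available in general, but free here), and it is precisely what forces the bias terms $\normn{f_{m_N}-f}^2$ out of the final risk bounds.

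First I would reproduce the algebraic identity \eqref{riskestimate} in empirical form,
\[
\normn{\hat f_m}^2=\normn{f_m}^2+\normn{\hat f_m-f_m}^2+2\sigma\langle f_m,\epsilon\rangle_n ,
\]
and compare $\operatorname{crit}(m_N)$ with $\operatorname{crit}(m)$ exactly as in the derivation of \eqref{lowpen}. Using $S_m\subseteq S_{m_N}$ and orthogonality for $\langle\cdot,\cdot\rangle_n$, then the bound $2ab\le a^2+b^2$ applied to the cross term $2\sigma\langle f_m-f_{m_N},\epsilon\rangle_n=2\,\normn{f_m-f_{m_N}}\cdot\sigma\langle g_m,\epsilon\rangle_n$ (so that the bias square cancels), with $g_m=(f_m-f_{m_N})/\normn{f_m-f_{m_N}}$ when $f_m\neq f_{m_N}$ and $g_m=0$ otherwise, and $h_m$ the associated unit vector of ${\mathbb R}^n$ with coordinates $n^{-1/2}g_m(t_k)$, one obtains after multiplying by $n/\sigma^2$ the exact analogue of \eqref{lowpen}:
\[
\frac{n}{\sigma^2}\bigl(\operatorname{crit}(m_N)-\operatorname{crit}(m)\bigr)\le -\bigl(n\chi_{m_N}^2-n\chi_m^2\bigr)+\langle h_m,\epsilon\rangle_+^2+\kappa\,(n-1-D_m).
\]
Here, since $S_m\subseteq S_{m_N}$, the quantity $n\chi_{m_N}^2-n\chi_m^2=\sum_{j\in m_N\setminus m}\langle\epsilon,\psi_j\rangle^2$ is a chi-square type statistics with $n-1-D_m$ degrees of freedom, which is $\ge n/2-1$ for the models with $D_m\le n/2$.

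The probabilistic control is then carried out verbatim as in the proof of Theorem~\ref{theo:Cut-off}. Applying the lower tail \eqref{leftchi} to $\sqrt{n\chi_{m_N}^2-n\chi_m^2}$ and Hoeffding's inequality \eqref{ehoeff} to $\langle h_m,\epsilon\rangle_+$, together with a union bound over the $\#{\mathcal M}=n-1$ models and the choice $x=\log(2/\delta)+\log(n-1)$, one notes that $x/n\to0$ plays here the role of the sub-exponentiality assumption \eqref{subexp} (it holds for free since $\#{\mathcal M}=n-1$); hence, given $\eta>0$, for $n$ large enough one has simultaneously for all $m$ with $D_m\le n/2$, off an event of probability at most $\delta$,
\[
\sqrt{n\chi_{m_N}^2-n\chi_m^2}\ge\sqrt{(1-\eta)(n-1-D_m)},\qquad \langle h_m,\epsilon\rangle_+\le\sqrt{\eta(n-1-D_m)}.
\]
Plugging these into the displayed bound and choosing $\eta=(1-\kappa)/4$ yields $\operatorname{crit}(m_N)-\operatorname{crit}(m)\le-\tfrac{1-\kappa}{2}\cdot\tfrac{\sigma^2}{n}(n-1-D_m)<0$ for every such $m$, so that $D_{\hat m}\ge n/2$ with probability at least $1-\delta$. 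For the risk bounds, Pythagoras' identity for $\langle\cdot,\cdot\rangle_n$ together with $f=f_{m_N}\in S_{m_N}$ and $S_{\hat m}\subseteq S_{m_N}$ gives $\normn{\hat f_{\hat m}-f}^2\ge\normn{\hat f_{\hat m}-f_{\hat m}}^2=(\sigma^2/n)\,n\chi_{\hat m}^2$; applying \eqref{leftchi} once more to the models with $D_m\ge n/2$ one gets $n\chi_{\hat m}^2\ge n/3$ off an event of probability $2\delta$, and taking $\delta=1/8$ and using Markov's inequality gives ${\mathbb E}_f[\,n\chi_{\hat m}^2\,]\ge n/4$, i.e. ${\mathbb E}_f\normn{\hat f_{\hat m}-f}^2\ge\sigma^2/4$. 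Finally, since $\hat f_{\hat m}-f\in{\mathcal S}_{n-1}$ and $\norm{g}=\normn{g}$ for every $g\in{\mathcal S}_{n-1}$ (again by \eqref{FourierDesign} and the ${\mathbb L}_2$-orthonormality of the $\phi_j$), the ${\mathbb L}_2$-risk bound ${\mathbb E}_f\norm{\hat f_{\hat m}-f}^2\ge\sigma^2/4$ follows at once from the empirical one.

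There is no deep obstacle: the argument is a faithful copy of the proof of Theorem~\ref{theo:Cut-off}, and the only points requiring care are the accounting of the $1/n$ factors --- which is why it is convenient to pass to the rescaled orthonormal system $(\psi_j)$ and to the statistics $n\chi_m^2$ --- and the two consequences of the exact design orthonormality \eqref{FourierDesign}, namely $f_{m_N}=f$ and the coincidence of $\normn{\cdot}$ and $\norm{\cdot}$ on ${\mathcal S}_{n-1}$, which together make the $\normn{f_{m_N}-f}$ bias terms vanish and let the ${\mathbb L}_2$ bound be deduced from the empirical one.
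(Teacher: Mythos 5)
Your proof is correct and follows exactly the route the paper intends, since the paper itself only says ``Mimicking the proof of Theorem~\ref{theo:Cut-off}, we obtain:'' and leaves the transcription to the reader. You have carried out that transcription faithfully --- the passage to the rescaled orthonormal system $(\psi_j)$, the statistics $n\chi_m^2$, and the observations that $f_{m_N}=f$ and that $\normn{\cdot}$ and $\norm{\cdot}$ coincide on ${\mathcal S}_{n-1}$ are precisely the points that make the general argument of Section~\ref{sec:Cut-off} go through in the functional setting.
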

Some simulations are carried out in Figure~\ref{graph} to illustrate  this last result in the non-asymptotic setting. More precisely, in the framework of Model~\eqref{regf} with $\sigma=1$ and $n=100$, we consider the estimation of the function $f(x)=2+0.7\sqrt{2}\cos(2\pi x)+0.5\sqrt{2}\sin(2\pi x)$, which brings this problem in the setting of Theorem~\ref{theo2:Cut-off}. Note in particular that $f$ belongs to $S_m$, with $m=\{1,2,3\}$. The graph of the left hand side provides the value of $D_{\hat m}$ with respect to $\kappa$, where $\kappa$ is the constant involved in the penalty function $\operatorname{pen}$ of Theorem~\ref{theo2:Cut-off}. We observe a jump around the value $\kappa=1$, as predicted by the theory, with in particular very large models being selected when $\kappa<1$. Observe that true model is selected ($D_{\hat m}=3$), as soon as $\kappa\geq 1.3$. On the right hand of Figure~\ref{graph}, we display the value of $\|\hat{f}_{m}\|^2$ with respect to $D_m$. Once $D_m$ is larger or equal to 3, this function is approximately linear and the estimation of the slope of the linear part of the curve is equal to $\hat\kappa\times\sigma^2/n$ with $\hat\kappa=0.988$.
\begin{figure}[!h]
	\begin{center}
		\includegraphics[scale=0.32]{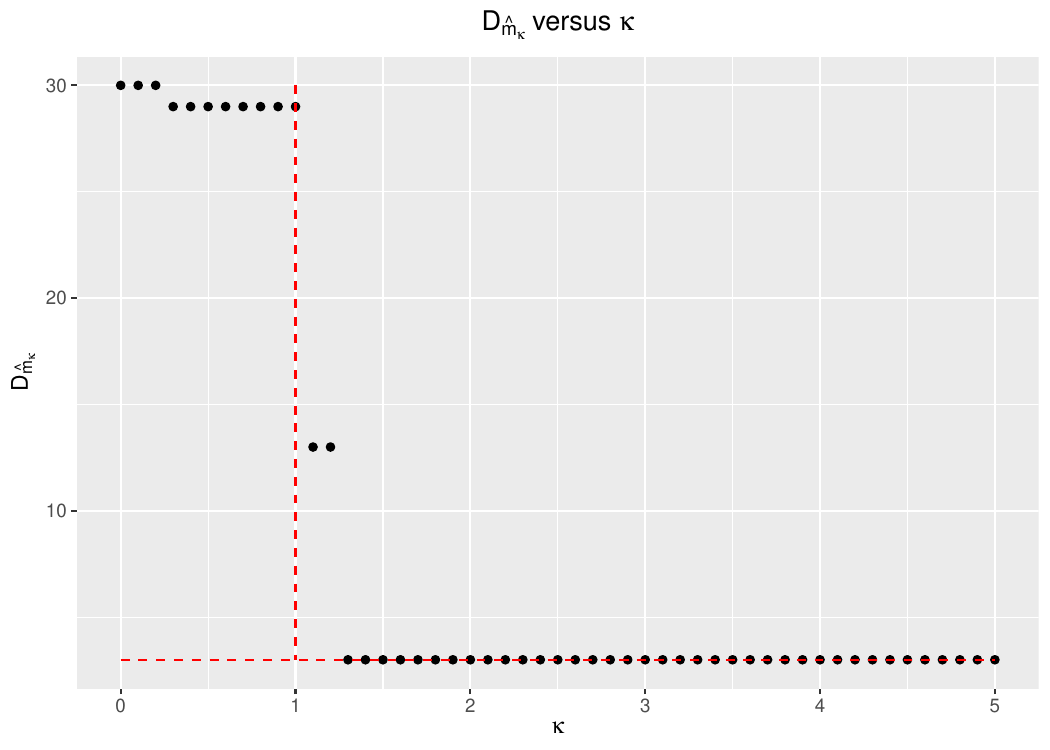}
		\includegraphics[scale=0.32]{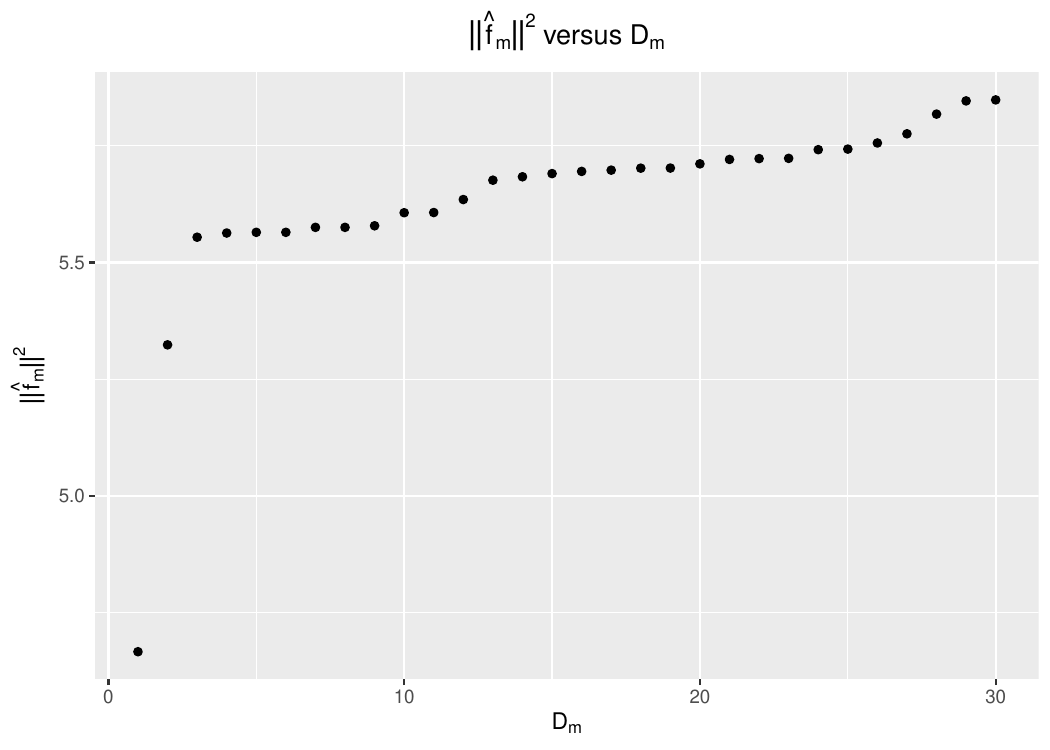}
	\end{center}
	\caption{Estimation of the function $f(x)=2+0.7\sqrt{2}\cos(2\pi x)+0.5\sqrt{2}\sin(2\pi x)$ in  Model~\eqref{regf} with $\sigma=1$ and $n=100$ in the setting of Theorem~\ref{theo2:Cut-off}. Left hand side: graph of $\kappa\longmapsto D_{\hat m}$. Right hand side:  graph of $D_m\longmapsto \|\hat{f}_{m}\|^2$.}\label{graph}
\end{figure}

\subsection*{Acknowledgements}
The authors are very grateful to Suzanne Varet, who carried out the numerical study of Section~\ref{sec:functional} and produced the graphs of Figure~\ref{graph}.



\end{document}